\newcommand{\cut}{\textsf{cut}}
\newcommand{\Irr}{\operatorname{Irr}}
\newcommand{\Aut}{\operatorname{Aut}}
\newcommand{\Syl}{\operatorname{Syl}}
\newcommand{\ccl}{\operatorname{ccl}}
\newcommand{\GL}{\operatorname{GL}}
\newcommand{\PSL}{\operatorname{PSL}}
\newcommand{\Z}{\mathcal{Z}}
\newcommand{\U}{\mathcal{U}}
\newcommand{\C}{\textup{C}}
\newcommand{\N}{\textup{N}}
\newcommand{\ZZ}{\mathbb{Z}}
\newcommand{\QQ}{\mathbb{Q}}
\newcommand{\slunlhd}{%
  \mathrel{\mathpalette\sl@unlhd\relax}%
}
\newcommand{\sl@unlhd}[2]{%
  \sbox\z@{$#1\lhd$}%
  \sbox\tw@{$#1\leqslant$}%
  \dimen@=\ht\tw@
  \advance\dimen@-\ht\z@
  \ifx#1\displaystyle
    \advance\dimen@ .2pt
  \else
    \ifx#1\textstyle
      \advance\dimen@ .2pt
    \fi
  \fi
  \ooalign{\raisebox{\dimen@}{$\m@th#1\lhd$}\cr$\m@th#1\leqslant$\cr}%
}
\newtheorem{theorem}{Theorem}[section]
\newtheorem{proposition}[theorem]{Proposition}
\newtheorem{lemma}[theorem]{Lemma}
\newtheorem{corollary}[theorem]{Corollary}
\newtheorem{maintheorem}{Theorem}
\theoremstyle{definition}
\newtheorem{question}[theorem]{Question}
\newtheorem{remark}[theorem]{Remark}
\newtheorem{example}[theorem]{Example}
\definecolor{LinkColor}{rgb}{0,0,0} 
\renewcommand{\O}{\textup{O}}
\title[Global and local properties of finite groups with finite $\Z(\U(\ZZ G))$]{Global and local properties of finite groups with only finitely many central units in their integral group ring}
\author[A.~B\"achle]{Andreas B\"achle}
\address{(Andreas B\"achle)}
\email{\href{mailto:ABaechle@gmx.net}{ABaechle@gmx.net}}
\author[M. Caicedo]{Mauricio Caicedo}
\address{(Mauricio Caicedo) Vakgroep Wiskunde, Vrije Universiteit Brussel, Pleinlaan 2, 1050 Brussels, Belgium}
\email{\href{mailto:mcaicedo@vub.be}{mcaicedo@vub.be}}
\author[E. Jespers]{Eric Jespers}
\address{(Eric Jespers) Vakgroep Wiskunde, Vrije Universiteit Brussel, Pleinlaan 2, 1050 Brussels, Belgium}
\email{\href{mailto:eric.jespers@vub.be}{eric.jespers@vub.be}}
\author[S. Maheshwary]{Sugandha Maheshwary}
\address{(Sugandha Maheshwary) Indian Institute of Science Education and Research, Mohali, Sector 81, Mohali (Punjab)-140306, India}
\email{\href{mailto:sugandha@iisermohali.ac.in}{sugandha@iisermohali.ac.in}}
\thanks{The work of the first and the second author was supported by postdoctoral fellowships of the FWO (Research Foundation Flanders). 
The third author is supported in part by Onderzoeksraad of Vrije Universiteit Brussel and FWO (Research Foundation Flanders). 
The research of the fourth author is supported by Department of Science and Technology (DST), India (INSPIRE/04/2017/000897).}
\subjclass[2010] {16S34, 16U60, 20E25} 
\keywords{integral group rings, central units, \cut\ groups, \cut\ property, inverse semi-rational, rational groups, Sylow subgroups}
\begin{document}

\maketitle

\begin{abstract}
The aim of this article is to explore global and local properties of finite groups whose
	integral group rings have only trivial central units, so-called cut groups. For such a group we study actions of Galois groups on its character table and show that the natural actions on the rows and columns are essentially the same, in particular the number of rational-valued irreducible characters coincides with the number of rational-valued conjugacy classes. Further, we prove a natural criterion for nilpotent groups of class 2 to be cut and give a complete list of simple cut groups. Also, the impact of the cut property on Sylow 3-subgroups is discussed. We also collect substantial data on groups which indicates that the class of cut groups is surprisingly large. Several open problems are included.
\end{abstract}

\section{Introduction} 

Let $G$ be a finite group and let $\mathcal U(\mathbb ZG)$ denote the group of units of its integral group ring $\mathbb{Z}G$. The most prominent elements of $\mathcal U(\mathbb ZG)$ are surely $\pm G$, the \emph{trivial units}. In the case these are all the units, this gives tight control on the group $G$ and all the groups with this property were explicitly described by G.~Higman \cite{Hig40}. If the condition is only put on the central elements, that is, $\mathcal{Z}(\mathcal U(\mathbb ZG))$, the center of the units of $\mathbb{Z}G$, only consists of the ``obvious'' elements, namely $\pm \mathcal{Z}(G)$, then the situation is vastly less restrictive and these groups are far from being completely understood. We will however show that this condition is restrictive enough to reveal many of their interesting properties. These groups $G$ with $\mathcal{Z}(\mathcal U(\mathbb ZG))= \pm \mathcal Z(G)$, i.e., all central units of $\mathbb{Z}G$ are trivial, are called \emph{\cut\ groups}, a name coined in \cite{BMP17}. 
The question of classifying \cut\ groups is also included in the collection of major open group ring problems in S.K.~Sehgal's path-breaking book \cite[Problem~26]{Seh93}.  

The study of \cut\ group dates back at least to the 1970s and some cornerstones were a chapter in a book of A.A.~Bovdi \cite{Bov87} and an article by J.~Ritter and Sehgal \cite{RS90}. There the condition for being a \cut\ group was reformulated in a group theoretic condition on $G$, also termed the RS-property (see \cite{BMP18}). An immediate consequence is that rational groups (i.e.\ groups whose character tables only contain rational entries), e.g.\ symmetric groups $S_n$ and Weyl groups of complex Lie algebras, are \cut\ groups. The class of rational groups is of vital interest in the representation theory of finite groups and gives a nice connection to this area. In \cite{CD10}, D.~Chillag and S.~Dolfi defined and studied so-called inverse semi-rational groups from a group theoretic perspective; it turned out that these groups are exactly the same groups  as \cut\ groups (cf.\ Proposition~\ref{prop:equiv_cut}). The \cut\ groups also play a role in other domains and they have been taken up by several authors under different names which exhibits a highly interesting interplay between  group theory, representation theory, algebraic number theory and even $K$-theory (see e.g.\ \cite[Section~3]{MP18} for a complete survey).

One of the original reasons for studying \cut\ groups is that $\mathcal U(\mathbb ZG)$ has a subgroup of finite index that is generated by the central units and the units of reduced norm one for all finite groups $G$. For many $G$ the latter group is generated by very specific unipotent units (called bicyclic units). These, together with the Bass units (a natural generalization of cyclotomic units) ``determine'' finitely many generators of the center. It is thus a natural question to determine when the central units can be avoided to determine finitely many generators of a large subgroup of $\mathcal U(\mathbb ZG)$, i.e., characterize the \cut\ groups. The problem of finding such a finite set of generators of large subgroups of $\mathcal U(\mathbb ZG)$ is a vibrant topic since several decades; for details and many references we refer to \cite[Chapter 11]{JdR15}.

Due to the fact that the rank of the center of $\mathcal{U}(\mathbb{Z}G)$ bounds the rank of its abelianization, and the latter is an important obstruction for certain fixed point properties (like Serre's property (FA)), interest in \cut\ groups arose recently from yet another perspective. This connection was used in \cite{BJJKT18_1,BJJKT18_2} to describe when $\mathcal{U}(\mathbb{Z}G)$ has certain fixed point properties and when it can be decomposed as a non-trivial amalgamated product up to commensurability. The relation between the abelianization and the center of $\mathcal{U}(\mathbb{Z}G)$ was further examined in \cite{BMM20}.

Many interesting properties of \cut\ groups are known. For instance, it follows from Higman's results \cite{Hig40} that an abelian group $G$ is \cut\ if and only if its exponent divides 4 or 6. The work of Ritter and Sehgal \cite{RS90} yields that the \cut\  property is quotient closed. It is observed by G.K.~Bakshi, S.~Maheshwary and I.B.S~Passi \cite{BMP17} that the center of a \cut\ group is again a \cut\ group, yet the \cut\ property is not preserved under taking direct products. They also proved that every \cut\ group has order divisible by $2$ or $3$. Furthermore, it is known that primes dividing the order of \cut\ groups in certain classes are strongly restricted \cite{CD10,BMP17,Mah18,Bac17}, e.g.\ A.~B{\"a}chle proved that only the primes $2$, $3$, $5$ and $7$ divide the order of a solvable \cut\ group \cite{Bac17}. Also explicit descriptions of all \cut\ groups in various classes of groups are obtained in these articles.

In the present work, we continue to examine the class of finite \cut\ groups. After setting up the necessary background in Section~\ref{sect:preliminaries}, we investigate  varied global and local properties of \cut\ groups. In Section~\ref{sect:rationality}, we observe the impact of the \cut\ property on character tables and show that the natural actions of a Galois group on the rows and the columns of the character table of a \cut\ group are basically the same. Precisely, in Theorem~\ref{theo:perm_iso_Galoi_actions} we prove the following:  

\begin{maintheorem}\label{prop:rational_classes_vs_characters} Let $G$ be a \cut\ group of exponent dividing $n$ and let $\zeta$ be a primitive $n^{th}$ root of unity. Then the natural actions of $\operatorname{Gal}(\mathbb{Q}(\zeta)/\mathbb{Q})$ on the conjugacy classes and on the irreducible characters of $G$ are permutation isomorphic.  In particular, the number of rational irreducible characters of $G$ is equal to the number of rational conjugacy classes of $G$. \end{maintheorem}

 Motivated by an element-wise criterion given in \cite{Mah18}, for a nilpotent group of class $2$ to be \cut, we give a short proof of an easy element-free criterion in Section \ref{sect:nilpotent_class_2}. This section is also supplemented with certain explicit and non-trivial examples of nilpotent \cut\ groups. 

Recently, it has been observed that an infinite simple group is always a \cut\ group \cite{BMP18}. However, this is not true for finite simple groups. In Theorem~\ref{theo:simple_cut}, we give a complete list of finite simple \cut\ groups.

\begin{maintheorem}\label{prop:simple_cut_groups} Let $G$ be a (finite) simple group. Then $G$ is \cut\ if and only if it is isomorphic to one of the following groups:
	\begin{enumerate}
		\item $C_2$,\ $C_3$,
		\item $A_7$,\ $A_8$,\  $A_9$,\ $A_{12}$,
		\item $L_2(7)$,\ $U_3(3)$,\ $U_3(5)$,\ $U_4(3)$,\ $U_5(2)$,\ $U_6(2)$,\ $S_4(3)$,\  $S_6(2)$,\ $O_8^+(2)$,
		\item $ M_{11}$,\ $M_{12}$,\ $M_{22}$,\ $M_{23}$,\  $M_{24}$,\ $Co_1$,\ $Co_2$,\ $Co_3$,\ $HS$,\ $McL$,\ $Th$,\ $M$.
	\end{enumerate} 
\end{maintheorem}

 Further, in Section~\ref{sect:local}, we explore some local properties of \cut\ groups: the impact of the \cut\ property on $p$-subgroups.
 In particular, we prove that Sylow $3$-subgroups of \cut\ groups are again \cut, for several classes of groups, see Proposition~\ref{prop:G_cut_Sylow_Cut_for_abelian_or_normal_P} and Theorem~\ref{G_cut_Sylow_Cut}:
 
 \begin{maintheorem} Let $G$ be a \cut\ group and $P \in \Syl_3(G)$. Then $P$ is also \cut, provided one of the following holds:
 	\begin{enumerate}
 		\item $P$ is abelian,
 		\item $P$ is  a normal subgroup of $G$;
 		\item $G$ is supersolvable,
 		\item $G$ is a Frobenius group,
 		\item $G$ is simple,
 		\item $G$ is of odd order and $\O_3(G)$ is abelian.
 	\end{enumerate}
 \end{maintheorem}
 
 Finally, in Section~\ref{sect:quantity}, we present surprising data on the existence of \cut\ groups. For instance, 86.62\% of all groups up to order $512$ are \cut.

\section{Notation and Preliminaries}\label{sect:preliminaries}

Throughout the article, all groups considered are finite, unless otherwise stated explicitly. Our notation is mostly standard, see \cite{Isa06, JdR15}. Let $G$ be a group and let  $x \in G$ be an element of $G$. The order of $G$ is denoted by $|G|$, the order of $x$ is denoted by $o(x)$ and $\C_{G}(x)$ denotes the centralizer of $x$ in $G$. Let $y\in G$. Then, by $x \sim y$ we mean that ``\emph{$x$ is conjugate to $y$ in $G$}'', i.e., $x^{g}:=g^{-1}x g=y$ for some $g \in G$ and $x^G$ denotes the conjugacy class of $x$ in $G$. The commutator subgroup of $G$ generated by all commutators $[x,y] = x^{-1}y^{-1}xy$, $x, y \in G$ is written as $G'$. By $H \leqslant G$ ($H \slunlhd G$), we indicate that $H$ is a subgroup (normal subgroup) of $G$.  For $H \leqslant G$,  $\N_{G}(H)$ and $[G:H]$ respectively denote the normalizer and the index of $H$ in $G$. If $p$ is a prime, then $\Syl_{p}(G)$ is the set of Sylow $p$-subgroups of $G$. By $\Irr(G)$ we denote the set of irreducible complex characters of $G$ and for $\chi\in\Irr(G)$, by $\QQ(\chi)$ we denote the field extension of the rationals generated by the values of the character $\chi$. Likewise, $\QQ(x)$ denotes the field extension of the rationals generated by the values of all the characters of $G$ at the element $x \in G$. 

An element $x \in G$ is called \emph{rational in $G$}, if $x^j \sim x$ for all $j$ coprime to $o(x)$ or, equivalently, $\QQ(x) = \QQ$, see e.g.\ \cite[Problem (2.12)]{Isa06}. Of course, a conjugacy class $x^G$ of $G$ is called \emph{rational} if it consists of elements rational in $G$. Likewise, we say a character $\chi$ of $G$ is \emph{rational} if $\QQ(\chi) = \QQ$. The group $G$ is called \emph{rational}, if every element $x \in G$ is rational in $G$, or, equivalently, every $\chi \in \Irr(G)$ is rational.

We briefly mention certain classes of groups that are closely related to \cut\ groups. A character $\chi$ of $G$ is called \emph{quadratic} if $[\QQ(\chi):\QQ] = 2$. If every irreducible character of the group $G$ is either rational or quadratic, then it is said to be \emph{quadratic rational}. Following \cite{CD10}, an element $x\in G$ is called \emph{semi-rational in $G$}, if there exists an integer $m$ such that $x^j \sim x$ or $x^j \sim x^m$ for every $j \in \ZZ$ coprime to $o(x)$ and $x$ is called \emph{inverse semi-rational in $G$}, if $m=-1$, i.e., $x^j \sim x$ or $x^j \sim x^{-1}$ for every such $j$. The group $G$ is called \emph{(inverse) semi-rational} if every element of $G$ is (inverse) semi-rational in $G$. From the definition it is clear that rational groups are inverse semi-rational.

We begin by stating some of the well-known equivalent criteria for a \cut\ group that are essential for this article.
\begin{proposition}[{see \cite[Proposition~2.2]{Bac17},~\cite[Theorem~5]{MP18}}] \label{prop:equiv_cut} For a group $G$, the following statements are equivalent:
	\begin{enumerate}
		\item\label{prop:cut} $G$ is a \cut\ group, i.e.\ $\mathcal{Z}(\mathcal{U}(\mathbb{Z} G)) = \pm \mathcal{Z}(G)$.
		\item\label{prop:conjugategenerators} $G$ is an  inverse semi-rational group, i.e.\ for every $x \in G$ and $j \in \mathbb{Z}$ with $j$ coprime to $o(x)$, $x^j \sim x$ or $x^j \sim x^{-1}$.
		\item\label{prop:wedderburndecomp} If $\QQ G \simeq \bigoplus_{k = 1}^m M_{n_k}(D_k)$ is the Wedderburn decomposition of the rational group algebra $\mathbb{Q}G$ ($m, n_k \in \mathbb{Z}_{\geqslant 1}$, $D_k$ division algebras), then for each $k$, \[ \Z(D_k) \simeq \QQ(\sqrt{-d})\] for some $d = d(k) \in \mathbb{Z}_{\geqslant 0}$.
		\item\label{prop:char} For each $\chi \in \Irr(G)$, $\QQ(\chi) = \mathbb{Q}(\sqrt{-d})$ for some $d = d(\chi) \in \mathbb{Z}_{\geqslant 0}$.
	\end{enumerate}
\end{proposition}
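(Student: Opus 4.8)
The plan is to split the four-way equivalence into the ring-theoretic block $(1)\Leftrightarrow(3)\Leftrightarrow(4)$ and the genuinely group-theoretic equivalence $(2)\Leftrightarrow(4)$, the latter being the interesting point. Throughout put $n=\exp(G)$ and $\Gamma=\operatorname{Gal}(\QQ(\zeta_n)/\QQ)$, acting on $\Irr(G)$ by $\chi\mapsto\chi^{\sigma_j}$, $\chi^{\sigma_j}(g)=\chi(g^j)$, and on conjugacy classes by $\sigma_j\cdot[x]=[x^j]$; write $\sigma_{-1}$ for complex conjugation. For $(3)\Leftrightarrow(4)$ I would invoke the standard fact that the simple components of $\QQ G$ are indexed by the $\Gamma$-orbits on $\Irr(G)$ and that the center of the component affording $\chi$ is the character field $\QQ(\chi)$; since characters in one orbit share this field, $(3)$ is literally a restatement of $(4)$. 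For $(1)\Leftrightarrow(3)$ I would identify the central units with the units of the order $\Z(\ZZ G)$ in $\Z(\QQ G)\cong\prod_k \Z(D_k)$ and apply Dirichlet's unit theorem componentwise: the free rank of $\Z(\U(\ZZ G))$ equals $\sum_k\bigl(r_1^{(k)}+r_2^{(k)}-1\bigr)$, a sum of nonnegative terms. This vanishes iff every $\Z(D_k)$ has $r_1+r_2=1$, i.e.\ equals $\QQ$ or an imaginary quadratic field, and it vanishes iff $\Z(\U(\ZZ G))$ is finite. Finally $\Z(\U(\ZZ G))$ is finite iff $G$ is \cut, since the central torsion units of $\ZZ G$ are always trivial (Berman), hence equal to $\pm\Z(G)$.

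For $(2)\Leftrightarrow(4)$ the plan is to reformulate \emph{both} conditions as an orbit-coincidence statement for $\Gamma$ against its subgroup $\langle\sigma_{-1}\rangle$. Writing $\QQ(\chi)$ as the fixed field of $\operatorname{Stab}_\Gamma(\chi)$, condition $(4)$ — every $\QQ(\chi)$ equals $\QQ$ or is imaginary quadratic — is equivalent to saying that for every $\chi$ and every $\sigma\in\Gamma$ one has $\chi^\sigma=\chi$ or $\chi^\sigma=\overline{\chi}$; equivalently the $\Gamma$-orbits on $\Irr(G)$ coincide with the $\langle\sigma_{-1}\rangle$-orbits (the cases $\QQ(\chi)=\QQ$ and $\QQ(\chi)$ imaginary quadratic being exactly the singleton orbits and the conjugate pairs, while real quadratic or larger fields are ruled out). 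Dually, since $\QQ(x)$ is the fixed field of $\operatorname{Stab}_\Gamma([x])=\{\,j:x^j\sim x\,\}$, condition $(2)$ says precisely that the $\Gamma$-orbits on the conjugacy classes coincide with the $\langle\sigma_{-1}\rangle$-orbits. As $\langle\sigma_{-1}\rangle\le\Gamma$, in each setting the two partitions coincide iff they have the same number of parts.

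It then remains to transfer this count from rows to columns, and here Brauer's permutation lemma does the work: the invertible character table intertwines the $\Gamma$-action on $\Irr(G)$ with the $\Gamma$-action on the classes, so for every $\sigma\in\Gamma$ the number of fixed irreducible characters equals the number of fixed classes. By the orbit-counting formula this yields at once that $\Gamma$ has equally many orbits on $\Irr(G)$ as on the classes, and likewise for $\langle\sigma_{-1}\rangle$ (using $|\Irr(G)|=|\operatorname{Cl}(G)|$ together with the equality of the numbers of real characters and real classes, the $\sigma_{-1}$-case of the lemma). Hence the orbit-coincidence holds on rows iff it holds on columns, giving $(2)\Leftrightarrow(4)$. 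I expect the main obstacle to be conceptual rather than computational: the naive route of transferring the literal pointwise conditions via Brauer's lemma fails, because equal permutation characters need not force isomorphic $\Gamma$-sets (Gassmann-type phenomena), so one must first compress each of $(2)$ and $(4)$ into the single numerical invariant ``number of orbits'' before the permutation lemma can be applied.
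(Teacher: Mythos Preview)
The paper does not provide its own proof of this proposition; it is quoted with external references and used as a known result throughout. Your argument is correct and reconstructs what is essentially the standard proof. The block $(1)\Leftrightarrow(3)\Leftrightarrow(4)$ goes through exactly as you outline: the centers of the Wedderburn components of $\QQ G$ are the character fields $\QQ(\chi)$, and Dirichlet's unit theorem applied to the order $\Z(\ZZ G)$ in $\Z(\QQ G)\cong\prod_k\Z(D_k)$, together with the fact that the torsion subgroup of $\Z(\U(\ZZ G))$ is $\pm\Z(G)$, yields the equivalence with $(1)$. For $(2)\Leftrightarrow(4)$, your reformulation of each condition as ``the $\Gamma$-orbits coincide with the $\langle\sigma_{-1}\rangle$-orbits'' (on $\Irr(G)$ for $(4)$, on conjugacy classes for $(2)$), the reduction of this coincidence to an equality of orbit counts (valid because one partition refines the other), and the transfer of these two counts between rows and columns via Brauer's permutation lemma, is precisely the Ritter--Sehgal mechanism underlying the cited references. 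Your caveat that Brauer's lemma only equates permutation characters, hence orbit counts, and not the $\Gamma$-set structure itself, correctly identifies why the compression to a single numerical invariant is needed before the lemma can be invoked.
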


In particular, the notion of a \cut\ group and an inverse semi-rational group are the same. To avoid confusion, from now on, we use the term inverse semi-rational for elements of the group and the term \cut\ for a group. Note that the above proposition implies that every element of order $1$, $2$, $3$, $4$ or $6$ is inverse semi-rational in every group it is contained in; in particular all groups of exponent a divisor of $4$ or $6$ are \cut.

There are other known equivalent characterizations of \cut\ groups (for example a group $G$ is \cut\ if and only if the Whitehead group $K_1(\mathbb Z G)$ is finite), cf.\ \cite[Theorem~5]{MP18}. We will give yet another one in Proposition~\ref{prop:row_vs_column} below. The \cut\ groups are in the intersection of quadratic rational and semi-rational groups. Observe that the dihedral group of order $16$ is both, quadratic rational and semi-rational, yet it is not \cut. However, we shall see in Section~\ref{sect:quantity} that \cut\ groups constitute a big part of this intersection.

A well-known and very useful fact is that the \cut\ property is inherited by quotients. There are short proofs (e.g.\ in \cite[Lemma~4]{CD10}) of this fact using any of the equivalent statements of \cut\ groups in Proposition~\ref{prop:equiv_cut} with the exception of the very definition. We take this opportunity to provide a short $\mathbb{Z}G$-proof that avoids passing through any of the characterizations in Proposition~\ref{prop:equiv_cut}.

\begin{lemma}\label{prop:quot}
 Let $G$ be a \cut\ group and $N \slunlhd G$. Then $G/N$ is a \cut\ group.
\end{lemma}
\begin{proof}
 Let $R$ be a subring of a ring  $T$ with the same identity. 
Suppose $I\subseteq R$ and $I$ is an ideal of $T$  
Assume $\mathcal{U}(T/I)$ is a torsion group. If $\mathcal{Z}(\mathcal{U}(R))$ is torsion, then $\mathcal{Z}(\mathcal{U}(T))$ is torsion:
take $v\in \mathcal{Z}(\mathcal{U}(T))$ then $\bar{v}^n=1$ and $\bar{v}^{-n}=1$, for
some positive integer $n$,
where bars indicate reduction modulo $I$. 
Hence, $v^n-1, v^{-n}-1\in I\subseteq R$. So, $v^n, v^{-n} \in R$ and thus $v^n\in \mathcal{Z}(\mathcal{U}(R))$. Since $\mathcal{Z}(\mathcal{U}(R))$ is torsion it follows that $v\in \mathcal{Z}(\mathcal{U}(T))$ has finite order and $\mathcal{Z}(\mathcal{U}(T))$ is a torsion group.

Apply this to 
 \[ R\ =\ \mathbb{Z}G \qquad \subseteq \qquad T\ =\ \mathbb{Z}G\cdot\hat{N}\ \oplus\ \mathbb{Z}G\cdot(1 - \hat{N}) \qquad (\subseteq \mathbb{Q}G), \]
 where $\hat{N}=\frac{1}{|N|}\sum_{n\in N } n$, a central idempotent of $\mathbb{Q} G$.
Clearly $I = |N|\cdot T$ is a common ideal of $T$ and $R$, and $T/I$ is finite.
Thus $\mathcal{U}(T/I)$ is a finite group. Note that $\mathbb{Z} G \cdot \widehat{N}$ and $\mathbb{Z} [G/N]$ are isomorphic as unital rings. Hence, if $G$ is a \cut group then $\mathcal{Z}(\mathcal{U}(R))$  is finite, and, by the above, $\mathcal{Z}(\mathcal{U}(T))$ is torsion and hence all central units of \ $\mathbb{Z} G\cdot \widehat{N}\simeq \mathbb{Z} [G/N]$ are torsion. By a corollary to the Berman-Higman Theorem \cite[Corollary~7.1.4~(2)]{JdR15}, all torsion central units of an integral group ring of a finite group  are trivial. Thus $G/N$ is a \cut group.
\end{proof}

\section{Rationality}\label{sect:rationality}

It is a major problem in the representation theory of finite groups to detect dualities between conjugacy classes and irreducible characters of groups. In general the number of rational conjugacy classes and rational irreducible characters of a group $G$ do not agree\footnote{Rational conjugacy classes should not be confused with so called $\mathbb{Q}$-conjugacy classes (or $\mathbb{Q}$-classes), see \cite[p.~231]{JdR15} for a definition: the former are actual conjugacy classes of $G$ whereas the latter are in general unions of conjugacy classes of $G$, similarly for rational irreducible characters and irreducible $\mathbb{Q}$-characters. In fact, the number of $\mathbb{Q}$-classes and irreducible $\mathbb{Q}$-characters always coincide by a theorem of Artin \cite[Corollary~7.1.12]{JdR15}.}: there are groups as small as order $32$ such that the number of rational conjugacy classes exceeds the number of rational irreducible characters and vice versa; see Example~\ref{exam:classes_vs_characters} below. However, for interesting classes these numbers do coincide and it is a popular theme to detect group or representation theoretic conditions that guarantee this. For instance, G.~Navarro and P.H.~Tiep proved that this is true for groups with at most two rational irreducible characters \cite[Corollary~9.7, Theorem~A]{NT08} (these proofs require the Classification of Finite Simple Groups). Also for groups with cyclic Sylow $2$-subgroups the number of rational conjugacy classes and rational characters coincide, as shown by Navarro and J.~Tent \cite{NT10}. Note that a group with a cyclic Sylow $2$-subgroup $P$ can only be \cut\ if $|P| \leqslant 4$, by Cayley's normal $2$-complement theorem and Lemma~\ref{prop:quot}.

For a structural explanation of such phenomena one might want to consider a suitable group that acts naturally on both sets in question. Assume that $G$ is a group of exponent dividing $n$ and let $\zeta$ be a fixed primitive complex $n^{th}$ root of unity. Then the elements of the Galois group $\operatorname{Gal}(\mathbb{Q}(\zeta)/\mathbb{Q})$ are determined by $\sigma \colon \zeta \mapsto \zeta^m$ for an integer $m = m(\sigma)$ coprime to $n$, unique modulo $n$. This induces an isomorphism $\operatorname{Gal}(\mathbb{Q}(\zeta)/\mathbb{Q}) \to \U(\mathbb{Z}/n\mathbb{Z})\colon \sigma \mapsto m$. Now $\operatorname{Gal}(\mathbb{Q}(\zeta)/\mathbb{Q})$ acts naturally on the irreducible characters $\Irr(G)$ of $G$ and the conjugacy classes $\ccl(G)$ of $G$ by \[ \chi^\sigma = \sigma^{-1} \circ \chi, \qquad (x^G)^\sigma = (x^{m})^G, \qquad \chi \in \Irr(G),\ x \in G,\ \sigma \in \operatorname{Gal}(\mathbb{Q}(\zeta)/\mathbb{Q}).\] Note that the fixed points of these actions are precisely the rational irreducible characters and the rational conjugacy classes, respectively. One might wonder under which conditions these actions are essentially the same, i.e., in which situations they are permutation isomorphic: recall that two actions of a group $\Gamma$ on sets $X$ and $Y$ are called \emph{permutation isomorphic}, if there is a bijection $\alpha\colon X \to Y$ such that $\alpha(x^g) = \alpha(x)^g$ for all $x \in X$ and all $g \in \Gamma$. The above actions are permutation isomorphic for $p$-groups of odd order (follows from Brauer's permutation lemma \cite[Theorem (6.32)]{Isa06}) and also for groups all whose Sylow subgroups are abelian \cite{Bro71}. Here we show that this also happens for \cut\ groups. In view of the group theoretic characterization of \cut\ groups in Proposition~\ref{prop:equiv_cut}\eqref{prop:conjugategenerators}, this can be seen as a contribution to \cite[(14.4)~Problem]{Nav10}. 

\begin{theorem}\label{theo:perm_iso_Galoi_actions} Let $G$ be a \cut\ group of exponent dividing $n$ and let $\zeta$ be a primitive $n^{th}$ root of unity. Then the natural actions of $\operatorname{Gal}(\mathbb{Q}(\zeta)/\mathbb{Q})$ on the conjugacy classes and on the irreducible characters of $G$ are permutation isomorphic.  In particular, the number of rational irreducible characters of $G$ is equal to the number of rational conjugacy classes of $G$.
\end{theorem}

\begin{proof} We start with the following group theoretic fact. \\
\emph{Claim. }Assume a (finite) group $\Gamma$ acts on two finite sets $X$ and $Y$ of the same size and every orbit of $\Gamma$ on both sets has length at most $2$. If every element of $\Gamma$ has the same number of fixed points on $X$ as it has on $Y$, then the actions are permutation isomorphic.

Denote by $f(\sigma)$ the number of fixed points of $\sigma \in \Gamma$ on $X$ or, equivalently, on $Y$. By a classical result on permutation actions, see \cite[Lemma (13.23)]{Isa06}, it suffices to show that each $\Delta \leqslant \Gamma$ has the same number of fixed points on $X$ as it has on $Y$ to conclude that the actions are permutation isomorphic.  Arguing by induction on $|\Gamma|$, it is enough to show that $\Gamma$ has the same number of fixed points on $X$ as it has on $Y$. 
By Burnside's lemma \cite[Theorem~3.22]{Rot95},
the number $k$ of orbits of $\Gamma$ on $X$ or on $Y$ is
  \[k = \frac{1}{|\Gamma|}\sum_{\sigma \in \Gamma} f(\sigma). \] Thus the number of fixed points on both sets is just $2k - |X| = 2k - |Y|$. This proves the claim.
  
Set $\Gamma = \operatorname{Gal}(\mathbb{Q}(\zeta)/\mathbb{Q})$. By Proposition~\ref{prop:equiv_cut}\eqref{prop:char}, we have for each $\chi \in \Irr(G)$ that $\QQ(\chi) = \QQ(\sqrt{-d})$ for some $d \in \ZZ_{\geqslant 0}$. So the length of each orbit of the natural action of $\Gamma$ on $X =\Irr(G)$ is at most $2$. 
By Proposition~\ref{prop:equiv_cut}\eqref{prop:conjugategenerators}, the length of each orbit of the natural action of $\Gamma$ on $Y = \ccl(G)$ is at most $2$. We want to employ Brauer's permutation lemma \cite[Theorem (6.32)]{Isa06} to obtain that each $\sigma \in \Gamma$ has the same number of fixed points on $X$ as it has on $Y$. To do this we need to verify that $\chi^\sigma(x^\sigma) = \chi(x)$ for all $\chi \in \Irr(G)$ and all $x \in G$. This can be checked by the following  calculation. Let $D$ be a representation with character $\chi$. Then $D(x)$ is conjugate to $\operatorname{diag}(\rho_1, ..., \rho_d)$, a diagonal matrix with $n^{th}$ roots of unity $\rho_1, ..., \rho_d$ on its diagonal. Let $\sigma \in \Gamma$ and $m = m(\sigma)$ be as defined above. Then \begin{align*} \chi^\sigma(x^\sigma) & = \sigma^{-1} \circ \operatorname{Tr} D(x^m) = \sigma^{-1} \circ \operatorname{Tr} D(x)^m = \sigma^{-1} \circ \operatorname{Tr} \operatorname{diag}(\rho_1^m, ..., \rho_d^m) \\ & = \sigma^{-1}(\rho_1^m + ... + \rho_d^m) = \rho_1 + ... + \rho_d = \operatorname{Tr} \operatorname{diag}(\rho_1, ..., \rho_d) = \operatorname{Tr} D(x) \\ & = \chi(x). \end{align*}
Consequently, the natural actions of $\Gamma$ on $X$ and on $Y$ are permutation isomorphic by the above claim. 

Since the actions of $\Gamma$ on the irreducible characters and on the conjugacy classes are permutation isomorphic, they have the same number of fixed points, so the claim about the rational valued characters and conjugacy classes follows.
\end{proof}

Let $G$ be a group of exponent $n$ and $\Gamma = \operatorname{Gal}(\mathbb{Q}(\zeta)/\mathbb{Q})$ for a primitive complex $n^{th}$ root of unity $\zeta$. Denote by $N \slunlhd \Gamma$ the kernel of the natural action of $\Gamma$ on $\Irr(G)$ (or on $\ccl(G)$). If $G$ is semi-rational or quadratic rational, then $\Gamma/N$ is an elementary abelian $2$-group (see e.g.\ \cite[Lemma~7]{CD10}).  If $G$ is a solvable semi-rational group (solvable quadratic rational group, respectively) then $|\Gamma/N|$ is bounded by $2^6$ (by $2^7$, respectively) by \cite[Theorem~B \& Corollary~8]{Ten12}. In particular, for a solvable \cut\ group $G$ one obtains that $\Gamma/N$ has order dividing $2^5$ by adapting the proof of \cite[Theorem~B]{Ten12} and using that $|G|$ is divisible by at most $4$ different primes by \cite[Theorem~1.2]{Bac17}. Many solvable \cut\ groups such that $|\Gamma/N| = 8$ can be found in the library of small groups in \textsf{GAP} \cite{GAP4}. For instance, the groups with SmallGroupIDs \texttt{[144, 58]}, \texttt{[192, 718]}, and \texttt{[960, 11363]}; the sets of fields of character values of their irreducible characters are, respectively, $\{\QQ, \QQ(\sqrt{-1}), \QQ(\sqrt{-2}), \QQ(\sqrt{-3}) \}$, $\{\QQ, \QQ(\sqrt{-1}), \QQ(\sqrt{-3}), \QQ(\sqrt{-6}) \}$, and $\{\QQ, \QQ(\sqrt{-1}), \QQ(\sqrt{-3}), \QQ(\sqrt{-15}) \}.$ We do not know of an example of a solvable \cut\ group such that $|\Gamma/N|$ exceeds $2^3$. Note that alternating groups are both, semi-rational and quadratic rational. So if we drop the solvability assumption, then \cite{RT95} shows that $\Gamma/N$ can get arbitrarily large for semi-rational and quadratic rational groups. However, we do not know whether or not the order $|\Gamma/N|$ is bounded in the case of arbitrary \cut\ groups (cf.\ \cite[Question~1]{Bac19}). 

We continue with yet another characterization of \cut\ groups based on fields of character values that is dual to the one in Proposition~\ref{prop:equiv_cut}\eqref{prop:char}: a group $G$ is \cut\ if, and only if, for all irreducible characters $\chi$ of $G$, $\QQ(\chi)$ is $\QQ$ or imaginary quadratic.
\begin{proposition}\label{prop:row_vs_column} Let $G$ be a group. The following statements are equivalent:
	\begin{enumerate}
		\item $G$ is a \cut\ group.
		\item\label{it:prop_columns} For each $x \in G$, $\QQ(x) = \mathbb{Q}(\sqrt{-d})$ for some $d = d(x) \in \mathbb{Z}_{\geqslant 0}$.
	\end{enumerate} 
\end{proposition}
\begin{proof} Assume first that $G$ is a \cut\ group. Let $x \in G$, so that $x$ is inverse semi-rational in $G$ and in particular semi-rational in $G$. From \cite[Lemma~1]{Ten12}, it follows that $[\QQ(x) : \QQ] \leqslant 2$. Now, if $x$ is not rational, then for some $\chi \in \Irr(G)$, $\QQ\neq\QQ(\chi(x))\subseteq \QQ(x)$ and degree considerations yield $\QQ(\chi(x))=\QQ(x)$. Furthermore,   $\QQ\neq\QQ(\chi(x))\subseteq \QQ(\chi)$ and by Proposition \ref{prop:equiv_cut}, $\QQ(\chi)=\QQ(\sqrt{-d})$, for some $d > 0$. Consequently, \eqref{it:prop_columns} follows. 

Conversely, assume that for each $x \in G$ we have $\QQ(x) = \QQ(\sqrt{-d})$, $d \geqslant 0$. Fix $x \in G$. Again by \cite[Lemma~1]{Ten12}, $x$ is semi-rational in $G$, so that there is $m \in \mathbb{Z}$ such that
\begin{equation}\label{conjugacy}\tag{C} \text{for every integer } j \text{ coprime to } o(x): \qquad
 x^{j} \sim x ~\text{ or }~ x^{j} \sim x ^{m}.
\end{equation}
Assume first $x \sim x^{-1}$. Then for every $\chi \in \Irr(G)$ we have $\chi(x) = \chi(x^{-1}) = \overline{\chi(x)}$, where bar indicates complex conjugation. It follows that $\chi(x) \in \mathbb{Q}(\sqrt{-d}) \cap \mathbb{R} = \mathbb{Q}$ for all $\chi \in \Irr(G)$. Thus $\QQ(x)=\QQ$, i.e., $x$ is rational in $G$. Otherwise $x$ is not conjugate to $x^{-1}$ in $G$, and in view of \eqref{conjugacy}, $x^{-1}$ is conjugate to $x^m$ and we actually may chose $m = -1$ in \eqref{conjugacy}. Hence in any case $x$ is inverse semi-rational in $G$ and $G$ is a \cut\ group (Proposition \ref{prop:equiv_cut}).
\end{proof}

Note that Theorem~\ref{prop:rational_classes_vs_characters} cannot be extended to the slightly larger class of semi-rational groups nor to the class of quadratic rational groups. A reason for this is that the nice symmetry  between the rows and the columns of the character table in the characterization of \cut\ groups (Propositions~\ref{prop:equiv_cut}\eqref{prop:char} and \ref{prop:row_vs_column}) does not hold true for these classes of groups. Indeed, by \cite[Lemma~1]{Ten12} a group is semi-rational if and only if $[\QQ(x):\QQ] \leqslant 2$ for all $x \in G$. However the degree of the dual field extensions $\QQ(\chi)$ over $\QQ$ can get large, e.g.\ for the semi-rational group $G$ with SmallGroupID \texttt{[384, 3283]} in \textsf{GAP} \cite{GAP4} there is $\chi \in \Irr(G)$ such that $\QQ(\chi) = \QQ(\zeta)$, for a primitive $24^{th}$ root of unity $\zeta$. Similarly, one can find examples of a quadratic rational groups $G$ such that the fields $\mathbb{Q}(x)$, $x\in G$ get large. The following example shows that already for groups of order $32$ the actions of the Galois group on the rows and columns of the character table need not be permutation isomorphic and even that the numbers of their fixed points might differ.

\begin{example}\label{exam:classes_vs_characters} 
In \cite[Section~6]{Ten12}, Tent  defined a semi-rational group $G$ and a quadratic rational group $H$ (the latter originally constructed by E.~Dade, see \cite[Example~3.6]{Bro71}) as follows:
\begin{align*} G = & \langle\ a, b, c \ | \ a^2 = b^2 = c^8 = 1,\ b^c = b,\ b^a = bc^4,\ c^a = c^3\  \rangle,  \\
H = & \langle\ a, b, c \ | \ a^2 = b^2 = c^8 = 1,\ b^c = b,\ b^a = b,\ c^a = bc^3\  \rangle  \end{align*}
(having \textsf{GAP}  SmallGroupIDs \texttt{[32, 42]} and \texttt{[32, 9]}, respectively). If we denote by $\Irr_\QQ(G)$ and $\ccl_\QQ(G)$, the set of rational irreducible characters and rational conjugacy classes of $G$, respectively, then we have $|\Irr_\QQ(G)| = 10 > |\ccl_\QQ(G)| = 8$ and $|\Irr_\QQ(H)| = 6 < |\ccl_\QQ(H)| = 8$. These are examples of smallest possible order for which these quantities differ. 
\end{example}

\section{Nilpotent \cut\ groups}\label{sect:nilpotent_class_2}

As mentioned in the introduction, it is well known that an abelian group is \cut, if and only if its exponent divides 4 or 6. As the \cut\ property is quotient closed (Lemma~\ref{prop:quot}) and the center of a \cut\ group is again a \cut\ group, we necessarily have for each \cut\ group $G$:
\begin{equation}  \text{for all } N \slunlhd G:\qquad \exp (\Z(G/N)) \mid 4\quad \text{ or }\quad \exp(\Z(G/N)) \mid 6 \tag{N}\label{N},\end{equation}
where $\exp(H) $ denotes the exponent of a group $H$.
However, rarely condition \eqref{N} is sufficient for $G$ to be a \cut\ group. For example, \eqref{N} holds for all non-abelian simple groups, but not all simple groups are \cut\ (see Section \ref{sect:simple_cut_groups}). Also, dihedral groups of order $2p$, $p$ a prime, satisfy \eqref{N}, but these groups are not \cut\ for $p \geqslant 5$ (and are of derived length $2$). 
In \cite{BMP17} it was noted that a nilpotent \cut\ group is always a $\{2,3\}$-group and an element wise criterion for a nilpotent group to be a \cut\ group was provided. In the case of nilpotency class $2$, the criterion was refined in \cite{Mah18}. For a nilpotent group $G$ of class $2$ to be a \cut\ group, we provide a characterization on the quotients of $G$. We prove that, for such groups, condition \eqref{N} is sufficient for $G$ to be a \cut\ group. 

After we had finished writing the first version of this article we were informed by V.~Bovdi that such a characterization was already obtained by Z.~Pata\u\i\ and A.~Bovdi \cite{Pat78,Bov87}.
 The formulation of \cite[Theorem~8.2]{Bov87} gave the inspiration for the new group theoretic proof below that is notably shorter than the ring theoretic proof presented in \cite{Bov87}. As it seems to be hard to get hold of the above sources, being available in Ukrainian and Russian only and since the result seems to have stayed unnoticed by many experts, we want to take the opportunity to popularize this theorem together with our concise proof below.

\begin{theorem}[Pata\u\i]\label{nilpotentClass2}
	Let $G$ be a nilpotent group of class at most $2$. Then $G$ is \cut\ if, and only if, the exponent of $\Z(G/N)$ divides $4$ or $6$, for all $N \slunlhd G$. Actually it is sufficient to deal with normal subgroups $N$ of the form $[g, G]$.
\end{theorem}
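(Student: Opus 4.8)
The plan is to prove the nontrivial direction: assuming condition \eqref{N} holds for all normal subgroups $N$ of the form $[g,G]$, I want to show that $G$ is \cut, i.e., every $x \in G$ is inverse semi-rational (Proposition~\ref{prop:equiv_cut}\eqref{prop:conjugategenerators}). The reverse direction is already handled by the general remarks preceding the theorem: the \cut\ property passes to quotients and the center of a \cut\ group is \cut, and an abelian \cut\ group has exponent dividing $4$ or $6$ (Higman), so $\exp(\Z(G/N)) \mid 4$ or $6$ for every $N \slunlhd G$. So the whole content is the sufficiency, and the exploitation of nilpotency class $2$.

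The key structural fact I would use is that in a group of class at most $2$, the commutator map is bilinear: for fixed $g$, the map $x \mapsto [g,x]$ is a homomorphism $G \to G' \leqslant \Z(G)$, and more usefully $[g,x^j] = [g,x]^j$ and $[g^j,x]=[g,x]^j$. First I would fix $x \in G$ and an integer $j$ coprime to $o(x)$, and try to produce a conjugate of $x^j$ that is $x$ or $x^{-1}$. Since conjugation gives $x^g = x[x,g] = x\,[g,x]^{-1}$, for any $g$ the element $x^g$ differs from $x$ by a central commutator. The strategy is: work in the quotient $G/N$ with $N = [g,G]$ for a suitably chosen $g$, use condition \eqref{N} to control the order of the image of $x$ modulo the center of that quotient, and then lift the semi-rationality information back. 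Concretely, I expect to reduce to understanding, for each $g$, the cyclic subgroup generated by $[g,x]$ and the constraint that $\exp(\Z(G/[g,G]))$ divides $4$ or $6$ forces the relevant central commutators to have order dividing $4$ or $6$, which is exactly the range of orders that are automatically inverse semi-rational (as noted after Proposition~\ref{prop:equiv_cut}).

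The mechanism I anticipate is the following: because $x^j x^{-1} = x^{j-1}$ and, modulo $N=[g,G]$, the coset $xN$ lies in $\Z(G/N)$ precisely when $[x,G]\leqslant N$, I would choose $g$ so that $[g,G]=N$ captures all the commutation of $x$ except along one direction, reducing $\overline{x}$ to an element of the center of $G/N$ whose order is then forced to divide $4$ or $6$. Elements of order dividing $4$ or $6$ are inverse semi-rational in any group containing them, giving the existence of a conjugating element $h\overline{\ }$ in $G/N$ with $\overline{x}^j = \overline{x}^{\pm 1}$. The remaining task is to lift this to $G$: the preimage conjugation moves $x^j$ to $x^{\pm 1}\cdot n$ for some $n \in N = [g,G]$, and I must absorb the error term $n$ by a further conjugation, again using bilinearity of the commutator to write $n$ as $[g,y]$ for appropriate $y$ and conjugating by $y$. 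I would iterate or combine these two conjugations into a single element $z$ with $x^{jz}=x^{\pm 1}$.

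The hard part will be the lifting step and the precise choice of $g$: ensuring that the central ambiguity $n$ coming from working modulo $[g,G]$ can genuinely be cancelled by an honest conjugation in $G$, rather than merely modulo a further subgroup. This is where nilpotency class exactly $2$ is essential, because bilinearity makes the set $\{[g,y] : y \in G\} = [g,G]$ a subgroup and lets me solve $[g,y]=n$ for $y$; in higher class this fails. I expect the reduction to normal subgroups of the form $N=[g,G]$ (the final sentence of the theorem) to come out naturally from this argument, since those are exactly the $N$ that isolate the commutation of a single generator $g$ with the rest of $G$, and controlling $\exp(\Z(G/[g,G]))$ for each $g$ suffices to control each coordinate of the commutator pairing separately.
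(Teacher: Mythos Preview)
Your overall strategy is sound, but the choice of $g$ is muddled and this propagates into the lifting step. You want $N=[g,G]$ to ``capture all the commutation of $x$ except along one direction'' while simultaneously making $\bar x$ central in $G/N$; these are in tension, since $\bar x\in\Z(G/N)$ forces $[x,G]\subseteq N$. The correct (and only natural) choice is simply $g=x$. With $N=[x,G]$ the argument runs cleanly with no iteration: $\bar x$ is central in $G/[x,G]$, so by hypothesis $o(\bar x)\mid 4$ or $o(\bar x)\mid 6$; for $j$ coprime to $o(x)$ (hence to $o(\bar x)$) this gives $j\equiv\pm1\pmod{o(\bar x)}$, i.e.\ $x^{j\mp1}\in[x,G]$. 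In class~$2$ the conjugacy class of $x^{\pm1}$ is exactly the coset $x^{\pm1}[x,G]$ (because $h\mapsto[x,h]$ is a homomorphism onto $[x,G]$), so $x^j\sim x$ or $x^j\sim x^{-1}$ and you are done. Your proposed absorption of the error term $n\in[g,G]$ by writing $n=[g,y]$ and conjugating by $y$ cannot work as stated: conjugating $x^{\pm1}$ by $y$ produces a factor $[x^{\pm1},y]$, not $[g,y]$; again this only matches when $g=x$, and then the ``lift'' is a single conjugation.

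For comparison, the paper's proof takes a structurally different route: it first uses nilpotency to split $G=P_2\times P_3$ into Sylow pieces, then invokes the criteria of \cite{Mah18} reducing the \cut\ property of such a product to the conditions $x^4\in[x,P_2]$, $x^3\in[x,P_3]$, and (when $P_3\neq1$) rationality of $P_2$, and reads each of these off from the hypothesis applied to $N=[x,G]$. Both arguments ultimately rest on the same class-$2$ identity $x^G=x[x,G]$ and the same specialisation $N=[x,G]$; your corrected direct argument is more self-contained and avoids the external citations, while the paper's version makes the interaction between the $2$- and $3$-parts explicit.
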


\begin{proof} Clearly the condition is necessary. For sufficiency,  assume that $G$ is a finite nilpotent group of class at most $2$ and the exponent of $\Z(G/N)$ divides 4 or 6, for all $N \slunlhd G$. The nilpotency of $G$ yields that $G$ is a $\{2, 3\}$-group and $G = P_2 \times P_3$, where $P_2$ and $P_3$ denote its Sylow $2$- and $3$-subgroups, respectively. In view of \cite[Theorem~3]{Mah18}, $G$ is a \cut\ group if, and only if both $P_{2}$ and $P_{3}$ are \cut\ groups; and, moreover, $P_{2} $ is rational, if $P_{3}$ is non-trivial. 
Since $G$ is of class at most $2$, $[h, G] = \{[h, g] \colon g \in G\}$ is a normal subgroup of $G$, for any $h\in G$. 
We first check that $P_{2}$ is a \cut\ group. By \cite[Corollary~3]{Mah18} this is equivalent with $x^4 \in [x,P_2]=[x, G]$ for all $x \in P_2$. But this follows from the assumptions, since the image of $x$ is contained in $\Z(G/[x, G])$. Similarly, $P_{3}$ is also a \cut\ group. Hence, it only remains to show that if $P_{3}$ is non-trivial, then $P_{2}$ is rational. Assume that $P_3 \not= \{1\}$ and take $x \in P_2$. 
Clearly $[x, G] =[x,P_2]\leqslant P_2$ and $3$ divides the order of $\Z(G/[x, G])$. Then, by the assumption, $x^6 \in [x, G]$ and thus $x^2 \in [x, G]$. 
Hence, for any odd integer
$j=2v+1$ we have  $x^{j}=x(x^{2})^{v}\in x [x,G]=x[x,P_2]$ and thus $x^j\in x^{P_2}$.
 So, $x$ is rational in $P_2$, as desired.
\end{proof}

\begin{remark}
The following two groups of nilpotency class $3$ illustrate that condition \eqref{N} is no longer sufficient to conclude that $G$ is \cut\ for nilpotent groups of nilpotency class exceeding $2$. First, let $G = D_{16}$, the dihedral group of order $16$. Then we have $\exp (\Z(G/N) )\mid 2$ for every normal subgroup, yet the group is not \cut. Similarly, if \[G = \langle\ a, b, c\ \mid\ a^9 = b^3 = c^3 = 1,\ b^a = b,\ a^c = ab,\ b^c = a^3b\ \rangle \simeq (C_9 \times C_3) \rtimes C_3, \] (SmallGroupID \texttt{[81, 8]}) all centers of quotients of $G$ are of exponent $3$, but this group is not \cut.
\end{remark}

There is a plethora of nilpotent \cut\ groups of small exponent, see Proposition~\ref{rem:negligible} and its proof. Here we provide further examples of nilpotent \cut\ groups, in particular some of arbitrary large exponent and derived length (and hence nilpotency class).

\begin{example}\label{ex:nilpotent}
		\begin{enumerate} 
 \item\label{ex:Sylow_Sym}\textbf{Sylow $p$-subgroups of $S_n$, for all $n$, $p \in \{2,3\}$:}
  The Sylow $2$- and $3$-subgroups of symmetric groups $S_n$ are rational and \cut, respectively, as follows from \cite[Remark~14]{CD10}. We provide some explanation here. The structure of the Sylow $p$-subgroups of the symmetric group $S_n$ is well known, see \cite[Theorem~7.27]{Rot95} and the discussion following it.
 Denote by $Q_n$ a Sylow $2$-subgroup of $S_n$. First assume that $n$ is a power of $2$. Then $Q_n = ((C_2 \wr C_2) ... \wr C_2) \wr C_2$ is an iterated wreath product of cyclic groups of order $2$. Clearly, $C_2$ is rational and so is $Q_n$ by \cite[Proposition~3.5]{Heg05}.  Now if $n$ is arbitrary, consider its base $2$ expansion $n = a_k\cdot 2^k + a_{k-1}\cdot 2^{k-1} + ... + a_1\cdot 2 + a_0$ with each $a_\ell \in \{0, 1\}$. Then the Sylow $2$-subgroup of $S_n$ is \begin{equation}\label{decomposition}\tag{D} Q_n = Q_{2^k}^{a_k} \times Q_{2^{k-1}}^{a_{k-1}} \times ... \times Q_2^{a_1}. \end{equation}  Since the direct product of rational groups is rational again, $Q_n$ is a rational group for all $n$.\\
 Similarly, the Sylow $3$-subgroups $R_n$ of $S_n$ are iterated wreath products of $C_3$ if $n$ is a power of $3$. Since $\QQ(\chi) \subseteq \QQ(\sqrt{-3})$ for all $\chi \in \Irr(C_3)$ we also have that for these iterated wreath products $\QQ(\psi) \subseteq \QQ(\sqrt{-3})$ for all $\psi \in \Irr(R_n)$ by repeated application of \cite[Proposition~3.5]{Heg05} (see also the remark before the definition preceding that proposition), so they are \cut\ groups. In the general setting, the Sylow $3$-subgroups $R_n$ of $S_n$ are also \cut\ as direct products of these iterated wreath products by \cite[Corollary~1]{BMP17}. Note that the exponent and derived length of $Q_n$ and $R_n$ grow beyond all limits.
 \item\textbf{Sylow $p$-subgroups of $A_n$, for all $n$, $p \in \{2,3\}$:} Also the Sylow $2$- and $3$-subgroups of alternating groups $A_n$ are rational and \cut, respectively. Since $A_n$ has index $2$ in $S_n$, the Sylow $3$-subgroups of $A_n$ are \cut\ by {\eqref{ex:Sylow_Sym}} above. We now show that the Sylow $2$-subgroups of alternating groups are rational. For this, let $Q_n$ be again a Sylow $2$-subgroup of $S_n$. Then $P_n = Q_n \cap A_n$ is a Sylow $2$-subgroup of $A_n$. Let $x \in P_n$ and let $j$ be an odd integer. We need to show that there is $y \in P_n$ such that $x^y = x^j$. Write $x = x_k\cdot...\cdot x_1$ with $x_\ell \in Q_{2^\ell}$ according to the decomposition in \eqref{decomposition}. Now $x_\ell \in Q_{2^\ell}$ is conjugate by a suitable $y_\ell \in Q_{2^\ell}$ to $x_\ell^j$, since $Q_{2^\ell}$ is rational. Note that we may choose $y_\ell$ as an odd permutation (in the case $y_\ell$ is even and $x_\ell \not= 1$, decompose $x_\ell = t_1 \cdot ... \cdot t_s$ in non-identity disjoint cycles of $Q_{2^\ell}$ of $2$-power length, then $y_\ell'= t_1 y_\ell \in Q_{2^\ell}$ is an odd permutation such that $x_\ell^{y_\ell'} = x_\ell^j$). Then for $y \in P_n$, the product of all these $y_\ell$, we have $x^y = x^j$.\\
 If $n = 4m+2$ or $n = 4m+3$ for some $m \in \mathbb{Z}_{\geqslant 1}$, then $S_{4m}$ embeds into $A_{4m+2}$ and $A_{4m+3}$ and the image has odd index $(2m+1)(4m+1)$ and $(2m+1)(4m+1)(4m+3)$, respectively, so their Sylow $2$-subgroups are isomorphic. 
 However, in all the other cases order considerations show that the Sylow $2$-subgroups of $A_n$ can never be isomorphic to a Sylow subgroup of a symmetric group, so they are genuinely different examples of rational nilpotent groups of large derived length and exponent.
\item\textbf{Some Sylow $p$-subgroups of linear groups:} \label{ex:Sylow_PSL} To indicate that it might be difficult to decide whether a group is \cut\ or not, even for intensively studied groups, we consider for instance the group of unipotent upper triangular matrices over finite fields.  Let $p$ be a prime and let $n$ and $f$ be positive integers. Denote by $P$ the Sylow $p$-subgroup of $\GL(n, p^f)$ consisting of unipotent upper triangular matrices. Note that $P$ is also isomorphic to a Sylow $p$-subgroup of $\PSL(n,p^f)$. It was believed for a long time and supported by experimental calculations that for $p = 2$ the group $P$ is a rational group for all $n$. Only in 1998, I.M.~Isaacs and D.~Karagueuzian, in \cite{IK98,IK98_2}, gave an example of a matrix $A \in P$ that is not conjugate to its inverse for $n = 13$ and $p^f = 2$. Actually, the Sylow $p$-subgroups of $\GL(n, p^f)$ are rational if, and only if, $p = 2$ and $n \leqslant 12$, see \cite[Introduction]{Mar12}. Also, the Sylow $3$-subgroups of $\GL(n, 3^f)$ are \cut\ for all $n \leqslant 12$, see \cite[Introduction]{Mar12}. However, it seems to be still unknown when exactly the Sylow $p$-subgroups of $\GL(n, p^f)$ are \cut. If they are, then ($p = 2$ and $n \leqslant 48$) or ($p = 3$ and $n \leqslant 18$) by \cite[Theorem]{Mar12}.
 \end{enumerate}
\end{example}

For some other simple groups it is verified in the proof of Theorem~\ref{G_cut_Sylow_Cut}\eqref{item:simple_3} that their Sylow $3$-subgroups are \cut.

\begin{question}\label{que:Sylow_classical}
 Which Sylow $2$- and $3$-subgroups of classical groups or groups of Lie type are \cut? 
\end{question}

\begin{remark}\label{re:emb_nilp} It is well known that every $\{2,3\}$-group can be embedded into a rational $\{2,3\}$-group, see e.g.~\cite[Proposition 1]{Gow76}. Not every nilpotent $\{2,3\}$-group can be embedded into a nilpotent rational group though, for trivial reasons (every non-trivial rational group has even order). However, Example~\ref{ex:nilpotent}\eqref{ex:Sylow_Sym} shows that every nilpotent $\{2,3\}$-group can be embedded into a nilpotent \cut\ group.

The set $\pi = \{2,3\}$ above is optimal for the embedding property for solvable $\pi$-groups into rational solvable groups: every rational solvable group is a $\{2,3,5\}$-group, by a classical result of R.~Gow \cite{Gow76}, and has an elementary abelian Sylow $5$-subgroup by work of P.~Heged\H{u}s \cite{Heg05}, so the structure of the Sylow $5$-subgroup is a proper obstruction. 
\end{remark}

However, for solvable \cut\ groups, we show next that the obstruction in Remark~\ref{re:emb_nilp} is no longer an obstruction for the corresponding embedding question. Recall \cite[Theorem~1.2]{Bac17} that the order of a solvable \cut\ group can only be divisible by the primes $2$, $3$, $5$ and $7$.

\begin{proposition} Every $5$- and every $7$-group can be embedded into a solvable \cut\ group. \end{proposition}
\begin{proof} We work the proof out for $p = 5$. Consider $P = \langle (1,2,3,4,5)\rangle \leqslant S_5$, a Sylow $5$-subgroup of $S_5$ and set \[W_5 = \N_{S_5}(P) = \langle\ (1,2,3,4,5), (2,3,5,4)\ \rangle \leqslant S_5, \] its normalizer in $S_5$.  Then $W_5 \simeq C_5 \rtimes C_4$, the Frobenius group of order $20$. Note that $\mathbb{Q}(\chi) \subseteq \mathbb{Q}(i)$ for all $\chi \in \Irr(W_5)$ and hence $W_5$ is a solvable \cut\ group.  Define $W_{5^{n+1}} = W_{5^n} \wr W_5$, $n\in \mathbb{Z}_{\geqslant1}$. Induction and \cite[Proposition~3.5]{Heg05} show that $W_{5^n}$ is a \cut\ group for all $n\in \mathbb{Z}_{\geqslant1}$ which is clearly solvable as wreath product of solvable groups. Note that the Sylow $5$-subgroup of $W_{5^n}$ is isomorphic to a Sylow $5$-subgroup of $S_{5^n}$. Since every $5$-group can be embedded into a symmetric group of sufficiently large degree, by Cayley's theorem, it can also be embedded into some $S_{5^n}$ and hence into the solvable \cut\ group $W_{5^n}$. 

For $p = 7$ one can argue similarly using $W_7 = \N_{S_7}(\ \langle (1,2,3,4,5,6,7)\rangle\ ) \leqslant S_7$, which is isomorphic to the Frobenius group of order $42$.
\end{proof}

\begin{question}\label{que:solvable_emb} Can every solvable $\{2,3,5,7\}$-group be embedded into a solvable \cut\ group?
\end{question}

\section{Simple \cut\ groups} \label{sect:simple_cut_groups}
 
So far, the properties of solvable \cut\ groups have been explored. A complete classification of finite metacyclic \cut\ groups is given in \cite[Theorem~5]{BMP17}. A description of Frobenius \cut\ groups can be found in \cite[Theorem~1.3]{Bac17} (it turns out that those groups are always solvable). In this section, we give a complete classification of finite simple \cut\ groups based on work of S.H.~Alavi and A.~Daneshkhah.

\begin{theorem}\label{theo:simple_cut} Let $G$ be a simple group. Then $G$ is \cut\ if and only if it is isomorphic to one of the following groups:
\begin{enumerate}
 \item $C_2$,\ $C_3$,
 \item $A_7$,\ $A_8$,\  $A_9$,\ $A_{12}$,
 \item $L_2(7)$,\ $U_3(3)$,\ $U_3(5)$,\ $U_4(3)$,\ $U_5(2)$,\ $U_6(2)$,\ $S_4(3)$,\  $S_6(2)$,\ $O_8^+(2)$,
 \item $ M_{11}$,\ $M_{12}$,\ $M_{22}$,\ $M_{23}$,\  $M_{24}$,\ $Co_1$,\ $Co_2$,\ $Co_3$,\ $HS$,\ $McL$,\ $Th$,\ $M$.
 \end{enumerate} \end{theorem}

\begin{proof} The simple semi-rational groups are classified in \cite[Theorem~1.1]{AD17} using the Classification of Finite Simple Groups. (Note that the groups in the statement of \cite[Theorem~1.1]{AD17} are not entirely correctly listed: the groups ${}^3D_4(2)$, ${}^3D_4(3)$, ${}^2B_2(8)$, ${}^2B_2(32)$, ${}^2G_2(27)$ and the Tits group ${}^2F_4(2)'$ are there by accident and the group $G_2(4)$ is missing; yet the proofs in the article \cite{AD17} correctly identify the former groups as not being semi-rational and the latter group as semi-rational). Disregarding the alternating groups this is a finite list. The alternating groups that are \cut\ are described in \cite{Fer04,AKS08}. An inspection of the character tables of the remaining groups (for example in ATLAS \cite{ATLAS} or \textsf{GAP}) using Proposition~\ref{prop:equiv_cut} reveals that of those exactly the groups listed above are \cut\ groups. \end{proof}

Note that in contrast to the case of finite simple groups, every infinite simple group $I$ is a \cut\ group i.e., has the property that $\Z(\U(\ZZ I)) = \pm \Z(I)$, see \cite[Examples following Theorem~5]{BMP18}.

\section{Local properties of \cut\ groups}\label{sect:local}

It was conjectured for a long time that being rational for $2$-elements is governed by the Sylow $2$-subgroup of a group. More precisely, already in Kletzing's book from 1984 \cite[p.~13]{Kle84}  it is referred to as a ``long standing conjecture''  that the Sylow $2$-subgroup of a rational group is again rational (recall that every non-trivial rational group is of even order and hence has a non-trivial Sylow $2$-subgroup). Eventually in 2012, Isaacs and Navarro provided rational groups of order $2^9\cdot 3$ with Sylow $2$-subgroups that are not rational \cite{IN12}. However, they also proved the conjecture for solvable groups with Sylow $2$-subgroups of nilpotency class at most $2$ \cite[Theorem~A]{IN12}.

Recall that every non-trivial \cut\ group has order divisible by $2$ or $3$ \cite[Theorem~1]{BMP17} and one might wonder which properties are determined locally (i.e., in $p$-subgroups or their normalizers). It is not hard to find \cut\ groups that have Sylow $2$-subgroups that are not \cut, e.g.\ the groups with SmallGroupIDs \texttt{[384, 18033]} and  \texttt{[384, 18040]} of order $2^7\cdot 3$. However, for the prime $3$ things seem to behave differently. The following lemma shows that being inverse semi-rational for $3$-elements is indeed a somehow local property.
\begin{lemma}\label{lem:cut-3-local} Let $G$ be a group and let $x \in G$ be a $3$-element. Then $x$ is inverse semi-rational in $G$ if and only if $x$ is inverse semi-rational in $P$ for some $P \in \Syl_3(G)$. \end{lemma}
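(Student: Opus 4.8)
The plan is to reformulate inverse semi-rationality of the $3$-element $x$ in terms of the image of a normalizer inside $\Aut(\langle x\rangle)$, and then transport this condition between $G$ and a suitable Sylow $3$-subgroup by a standard Sylow argument. Write $o(x)=3^n$ and identify $\Aut(\langle x\rangle)$ with $(\ZZ/3^n\ZZ)^\times$, a cyclic group of order $2\cdot 3^{n-1}$; let $Q$ be its unique Sylow $3$-subgroup (of order $3^{n-1}$), and note that $-1$, the only involution, does \emph{not} lie in $Q$. For a subgroup $H \leqslant G$ with $x \in H$, set $A_H := \{\, j \in (\ZZ/3^n\ZZ)^\times : x^j \text{ is conjugate to } x \text{ in } H \,\}$. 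Since $x^j$ conjugate to $x$ forces the conjugating element to normalize $\langle x\rangle$, the set $A_H$ is exactly the image of the conjugation map $\N_H(\langle x\rangle) \to \Aut(\langle x\rangle)$; this is a subgroup of $(\ZZ/3^n\ZZ)^\times$ with kernel $\C_H(x)$, so $A_H \cong \N_H(\langle x\rangle)/\C_H(x)$.

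The first key step is the equivalence: $x$ is inverse semi-rational in $H$ if and only if $Q \leqslant A_H$. Indeed, inverse semi-rationality says precisely $A_H \cup (-A_H) = (\ZZ/3^n\ZZ)^\times$; since the unique subgroup of index $2$ is $Q$ and it avoids $-1$, a short case analysis on whether $-1 \in A_H$ shows the union fills the whole group exactly when $A_H \supseteq Q$. This reformulation instantly yields the easy direction: if $x$ is inverse semi-rational in some $P \in \Syl_3(G)$, then $Q \leqslant A_P \leqslant A_G$ (as conjugacy in $P$ implies conjugacy in $G$), so $x$ is inverse semi-rational in $G$.

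For the converse, assume $Q \leqslant A_G$, where $A_G \cong \N_G(\langle x\rangle)/\C_G(x)$ using that all $G$-conjugacies among generators of $\langle x\rangle$ already occur inside $N := \N_G(\langle x\rangle)$. Choose $S \in \Syl_3(N)$ with $x \in S$ (possible since $\langle x\rangle$ is a $3$-subgroup of $N$). Under the surjection $N \twoheadrightarrow N/\C_G(x) \cong A_G$ the image of $S$ is a Sylow $3$-subgroup of $A_G$; but $A_G$ is abelian, so its Sylow $3$-subgroup is unique and, as $Q \leqslant A_G$, equals $Q$. Hence the image of $S$ in $\Aut(\langle x\rangle)$ is exactly $Q$. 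Now enlarge $S$ to a Sylow $3$-subgroup $P$ of $G$; then $x \in S \leqslant P$ and $S \leqslant \N_P(\langle x\rangle)$, so $A_P$ contains the image of $S$, namely $Q$. On the other hand $A_P$ is a $3$-group (a section of the $3$-group $P$) inside the abelian group $\Aut(\langle x\rangle)$, whence $A_P \leqslant Q$. Therefore $A_P = Q$, and $x$ is inverse semi-rational in $P$, as desired.

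The step requiring the most care is showing that the whole relevant automorphism action can be concentrated in one Sylow $3$-subgroup of $G$; the two enabling facts are that all conjugations between generators of $\langle x\rangle$ live in $\N_G(\langle x\rangle)$ (so the entire question reduces to the abelian group $\Aut(\langle x\rangle)$), and that a Sylow $3$-subgroup maps onto a Sylow $3$-subgroup of any quotient. Together these pin down the image of $S$ as exactly $Q$, after which the inclusion $S \leqslant P$ completes the argument. One should also verify the degenerate cases $n \leqslant 1$, where $Q$ is trivial and every element of order dividing $3$ is automatically inverse semi-rational, which agree with the reformulation.
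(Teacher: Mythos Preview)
Your proof is correct and follows essentially the same approach as the paper: both arguments reformulate inverse semi-rationality of the $3$-element $x$ as the condition that the image $B_H(x)=\N_H(\langle x\rangle)/\C_H(x)$ inside $\Aut(\langle x\rangle)\cong C_2\times C_{3^{n-1}}$ contains the full Sylow $3$-subgroup $Q$, and then produce a $3$-subgroup of $G$ realizing all of $Q$ by conjugation. The only cosmetic difference is in how that $3$-subgroup is found: the paper phrases the criterion as ``$x\sim x^4$'', takes a single witness $g\in\N_G(\langle x\rangle)$ with $x^g=x^4$, replaces $g$ by its $3$-part, and embeds $\langle x,g\rangle$ in a Sylow $3$-subgroup; you instead take a Sylow $3$-subgroup $S$ of $\N_G(\langle x\rangle)$ containing $x$ and use that its image in the quotient is a Sylow $3$-subgroup, hence equals $Q$. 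These are two packagings of the same idea.
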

\begin{proof} Suppose $y$ is an element of a group $Y$. Then $B_Y(y): = \N_Y(\langle y \rangle)/\C_Y(y)$ can naturally be identified with a subgroup of $\Aut(\langle y \rangle)$, the automorphism group of $\langle y\rangle$. By \cite[Lemma 5]{CD10}, the element $y$ is inverse semi-rational in $Y$ if and only if $B_Y(y) \langle \tau \rangle = \Aut(\langle y \rangle)$, where $\tau \colon \langle y \rangle \to \langle y \rangle ~(w \mapsto w^{-1})$ denotes the inversion automorphism of $\langle y \rangle$. Recall that the automorphism group of the cyclic group $C_{3^f}$ of order $3^f$ is cyclic $(\Aut(C_{3^f}) \simeq C_2 \times C_{3^{f-1}})$, and $2$ is a primitive root modulo $3^f$, i.e., $2$ generates the unit group of the ring of integers modulo $3^f$. Hence, the $3$-element $x$ is inverse semi-rational in an ambient group if and only if $x$ is conjugate to $x^4$ in that group.  Assume that the $3$-element $x \in G$ of order $3^f$ is inverse semi-rational in $G$. Then there exists $g \in \N_G(\langle x \rangle)$ such that $x^g = x^4$. Since the automorphism $w \mapsto w^4$ of $\langle x \rangle$ is of $3$-power order, we may replace $g$ by its $3$-part, if necessary, and can assume that $g$ is also a $3$-element. Now, since $g$ normalizes $\langle x \rangle$, the subgroup $S = \langle x, g \rangle$ is a $3$-subgroup of $G$. By Sylow's theorem, $S$ is contained in a Sylow $3$-subgroup $P$ of $G$ and $x$ is inverse semi-rational in $P$. The other implication is clear. \end{proof}

The above lemma is clearly false for $p$-elements, $p \geqslant 5$.\\

It may be observed that if $G$ is a \cut\ group and  $P \in \Syl_3(G)$ then $P$ is \cut\ if and only if for all $x \in P$ and for all $S\in  \Syl_3(G)$ containing $x$, $x$ is inverse semi-rational in $S$.
Lemma~\ref{lem:cut-3-local} asserts that each $3$-element of a \cut\ group is inverse semi-rational in at least one Sylow $3$-subgroup. From the next lemma one can deduce that it is inverse semi-rational in all Sylow $3$-subgroups, in certain cases.

\begin{lemma}\label{prop:centers_of_sylow_subgroups} Let $G$ be a \cut\ group and $P \in \Syl_p(G)$ for some prime $p$. Then $\exp \Z(P) \mid p$, if $p$ is odd and $\exp \Z(P) \mid 4$, if $p = 2$.  \end{lemma}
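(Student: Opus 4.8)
The plan is to exploit the central-element characterization of the \cut\ property from Lemma~\ref{prop:row_vs_column}, which tells us that for every $z \in G$ the field $\QQ(z)$ is of the form $\QQ(\sqrt{-d})$, i.e.\ either $\QQ$ itself or an imaginary quadratic field. The key observation is that an element lying in the center of a Sylow $p$-subgroup behaves, with respect to conjugacy in $G$, very much like its counterpart in $P$, because $\C_G(z) \supseteq P$ for such $z$ forces $P$ to be a full Sylow $p$-subgroup of $\C_G(z)$, and the action that conjugation in $G$ induces on $\langle z\rangle$ is constrained by the order of that action modulo $p$-parts.

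First I would reduce to understanding $B_G(z) = \N_G(\langle z\rangle)/\C_G(z)$, viewed inside $\Aut(\langle z\rangle)$, exactly as in the proof of Lemma~\ref{lem:cut-3-local}. For $z \in \Z(P)$ of $p$-power order $p^f$, I want to show that $B_G(z)$ can contain no automorphism of $\langle z\rangle$ of order divisible by $p$. The reason is a Sylow-counting argument: if some $g \in \N_G(\langle z\rangle)$ induced an automorphism of $z$ whose $p$-part is non-trivial, I could, after replacing $g$ by an appropriate power, assume $g$ induces an automorphism of order $p$ and (taking the $p$-part of $g$) assume $g$ itself is a $p$-element normalizing $\langle z\rangle$ but \emph{not} centralizing $z$. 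Then $\langle z, g\rangle$ would be a $p$-group in which $z$ is not central, yet $z\in\Z(P)$ and $\langle z,g\rangle$ sits inside some Sylow $p$-subgroup, giving a contradiction once one aligns the two Sylow subgroups via conjugacy. So $B_G(z) \leqslant \Aut(\langle z\rangle)$ has order prime to $p$.

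Next I would translate this constraint into a bound on $[\QQ(z):\QQ]$ together with the \cut\ restriction. Since $z$ is inverse semi-rational in $G$ (Proposition~\ref{prop:equiv_cut}), the characterization via \cite[Lemma~5]{CD10} gives $B_G(z)\langle\tau\rangle = \Aut(\langle z\rangle)$, where $\tau$ is inversion. Combining this with $|B_G(z)|$ being prime to $p$, the whole $p$-part of $\Aut(C_{p^f})$ must be supplied by $\langle\tau\rangle$. For odd $p$, $\Aut(C_{p^f})\cong C_{p-1}\times C_{p^{f-1}}$ is cyclic, its $p$-part is $C_{p^{f-1}}$, and $\tau$ has order $2$, which is prime to $p$; hence $\langle\tau\rangle$ contributes nothing to the $p$-part, forcing $p^{f-1}=1$, i.e.\ $f\leqslant 1$ and $\exp\Z(P)\mid p$. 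For $p=2$, $\Aut(C_{2^f})\cong C_2\times C_{2^{f-2}}$ for $f\geqslant 3$, and here $\tau$ can contribute a factor of $2$; working out the available $2$-part against $B_G(z)$ of odd order yields $f\leqslant 2$, i.e.\ $\exp\Z(P)\mid 4$, which accounts for the exceptional even-prime bound.

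The main obstacle I anticipate is the Sylow-alignment step that shows $B_G(z)$ has order prime to $p$: one must be careful that $z\in\Z(P)$ for a \emph{particular} Sylow subgroup does not immediately prevent a \emph{different} $g$ from acting non-trivially, so the argument has to pass to the $p$-part of $g$ and embed $\langle z,g\rangle$ into a Sylow $p$-subgroup, then use conjugacy of Sylow subgroups to return to $P$ and derive the contradiction with centrality. Handling the low-exponent cases ($f=1,2$) and the genuinely different behavior of $p=2$ (where $\Aut(C_4)\cong C_2$ and $\Aut(C_{2^f})$ is non-cyclic) requires separating these from the generic odd-prime computation, but each is a short finite check once the prime-to-$p$ bound on $B_G(z)$ is in hand.
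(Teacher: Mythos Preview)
Your approach is correct and is essentially the paper's: both arguments show that $B_G(z)=\N_G(\langle z\rangle)/\C_G(z)$ has order prime to $p$, combine this with \cite[Lemma~5]{CD10} (equivalently $[\Aut(\langle z\rangle):B_G(z)]\leqslant 2$), and read off the bound on $f$ from the structure of $\Aut(C_{p^f})$. Your Sylow-alignment detour is unnecessary, though---since you already noted $P\leqslant \C_G(z)$, the index $[\N_G(\langle z\rangle):\C_G(z)]$ divides the $p'$-number $[G:P]$, which is exactly the one-line argument the paper gives.
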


\begin{proof} Let $x \in \Z(P)$ be an element of order $p^f$. Then $P \leqslant \C_G(x)$ and hence $B_G(x) = \N_G(\langle x \rangle)/\C_G(x)$ has an order not divisible by $p$. Note that, in case $p$ is odd, $\Aut(\langle x \rangle) \simeq C_{p^{f-1}} \times C_{p-1}$ and $\Aut(\langle x \rangle) \simeq C_{2^{f-2}} \times C_{2}$, if $p = 2$. Further, by \cite[Lemma 5]{CD10}, we have that $[\Aut(\langle x \rangle) : B_G(x)] \leqslant 2$ and thus the result follows.\end{proof}

In particular, abelian Sylow $p$-subgroups of \cut\ groups are elementary abelian for odd primes $p$ and abelian Sylow $2$-subgroups of \cut\ groups are of exponent dividing $4$.\\

Let $p$ be a prime. For a group $X$ denote by $\O_{p}(X)$ and $\O_{p'}(X)$ the maximal normal $p$-subgroup of $X$ and maximal normal $p'$-subgroup of $X$, respectively. For a group $G$ we can define the upper $p$-series by \[1\ \leqslant\ \O_{p'}(G)\ \leqslant\ \O_{p',p}(G)\ \leqslant\  \O_{p',p, p'}(G)\ \leqslant\ ...\ , \] where $\O_{p',p}(G)/\O_{p'}(G) = \O_{p}(G/\O_{p'}(G))$, $\O_{p',p, p'}(G)/\O_{p',p}(G) =  \O_{p'}(G/\O_{p',p}(G))$ and so on (alternating between $p$ and $p'$). If this series terminates in $G$, then $G$ is called \emph{$p$-solvable}. In this case, the \emph{$p$-length} of $G$ is defined to be the number of occurrences of the symbol $p$ in the subscript of the first group in the upper $p$-series of $G$ that is equal to $G$. We prove the following:

\begin{proposition}\label{p-length_1} Let $G$ be a $3$-solvable \cut\ group of $3$-length at most $1$, and let $P \in \Syl_3(G)$. Then $P$ is \cut. \end{proposition}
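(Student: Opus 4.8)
The plan is to reduce to the case of a normal Sylow $3$-subgroup and then re-use the cyclic-automorphism argument already exploited in Lemma~\ref{lem:cut-3-local}. Set $M = \O_{3'}(G)$ and $N = \O_{3',3}(G)$, so that, by the hypothesis of $3$-length at most $1$, the quotient $G/N$ is a $3'$-group and $N/M = \O_3(G/M)$ is a $3$-group; in particular $N/M$ is a \emph{normal} Sylow $3$-subgroup of $G/M$. Since $P \in \Syl_3(G)$ satisfies $P \cap M = \{1\}$ (as $|M|$ is coprime to $3$) while $PM/M$ is a Sylow $3$-subgroup of $G/M$, we get $PM = N$ and hence $P \cong N/M$. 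Because being \cut\ depends only on the isomorphism type, it therefore suffices to prove the following reduced statement: \emph{if $H$ is a \cut\ group whose Sylow $3$-subgroup $Q$ is normal, then $Q$ is \cut}. We then apply it with $H = G/M$ (which is \cut, the \cut\ property being quotient closed) and $Q = N/M \cong P$.

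For the reduced statement I would argue elementwise via the criterion of Proposition~\ref{prop:equiv_cut}(\ref{prop:conjugategenerators}): it is enough to show that every $x \in Q$ is inverse semi-rational in $Q$. The crucial structural input is that a normal Sylow $3$-subgroup is the \emph{unique} Sylow $3$-subgroup of $H$, so that every $3$-subgroup of $H$ is contained in $Q$. Fix $x \in Q$ of order $3^f$. Exactly as recalled in the proof of Lemma~\ref{lem:cut-3-local}, since $\Aut(\langle x\rangle)$ is cyclic and $2$ is a primitive root modulo $3^f$, being inverse semi-rational in an ambient group is equivalent to being conjugate to $x^4$ in it. As $H$ is \cut, $x$ is inverse semi-rational in $H$, so $x^h = x^4$ for some $h \in \N_H(\langle x\rangle)$.

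The key manoeuvre is then to force the conjugating element into $Q$. The automorphism $w \mapsto w^4$ of $\langle x\rangle$ has $3$-power order, so replacing $h$ by its $3$-part (which, as in Lemma~\ref{lem:cut-3-local}, still induces $w \mapsto w^4$) we may assume $h$ is a $3$-element. Then $\langle x, h\rangle$ is a $3$-subgroup of $H$, hence contained in the unique Sylow $3$-subgroup $Q$; in particular $h \in Q$. Thus $x \sim x^4$ already inside $Q$, so $x$ is inverse semi-rational in $Q$. Since $x \in Q$ was arbitrary, $Q$ is \cut, which establishes the reduced statement and with it the proposition.

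I expect the main obstacle to be conceptual rather than computational, namely arranging the reduction so that the Sylow $3$-subgroup becomes normal (hence unique): this is precisely what allows the ``replace $h$ by its $3$-part'' trick to deposit the conjugating element inside the Sylow subgroup. Without normality one only learns that $\langle x, h\rangle$ lies in \emph{some} Sylow $3$-subgroup, which need not be $P$, and the argument stalls; the $3$-length-at-most-$1$ hypothesis, applied after passing to $G/\O_{3'}(G)$, is exactly what removes this difficulty.
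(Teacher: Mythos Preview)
Your proof is correct. The paper's argument is closely related but mechanically different: rather than passing to the quotient $G/\O_{3'}(G)$ and then re-running the $3$-part trick of Lemma~\ref{lem:cut-3-local} inside a group with normal (hence unique) Sylow $3$-subgroup, it stays in $G$ and exploits the semi-direct product decomposition $N = M \rtimes P$ directly. Given $x \in P$, Lemma~\ref{lem:cut-3-local} produces some Sylow $3$-subgroup $Q \leqslant N$ in which $x$ is inverse semi-rational, hence some $n \in N$ with $x^n = x^4$; writing $n = my$ with $m \in M$, $y \in P$ and applying the retraction $\pi \colon N = MP \twoheadrightarrow P$ (identity on $P$, trivial on $M$) gives $x^y = x^4$ with $y \in P$. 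So where you force the conjugating element into $P$ via uniqueness of the Sylow after quotienting, the paper forces it in via projection onto the complement; both manoeuvres rest on the same structural fact $N = M \rtimes P$, and each yields the result with equal economy. One organisational contrast worth noting: your ``reduced statement'' (normal Sylow $3$-subgroup implies Sylow is \cut) is, in the paper, deduced \emph{from} this proposition as a corollary (Proposition~\ref{prop:G_cut_Sylow_Cut_for_abelian_or_normal_P}), whereas you prove it first and reduce to it.
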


\begin{proof} Since $G$ has $3$-length at most $1$, $G$ has normal subgroups $1 \leqslant M \leqslant N \leqslant G$ such that $M$ and $G/N$ are $3'$-groups and $N/M$ is a $3$-group. Let $x \in P$. Then $x$ is inverse semi-rational in $G$ and hence also in some Sylow $3$-subgroup $Q$ of $G$ by Lemma~\ref{lem:cut-3-local}. Since $N$ is a normal subgroup of $3'$-index, $Q$ is contained in $N$ and hence $x$ is inverse semi-rational in $N$. Note that $M$ is a normal complement for $P$ in $N$, that is, $N = MP$ is the semi-direct product of the normal subgroup $M$ with $P$ and the restriction to $P$ of the projection map $\pi \colon N = MP \twoheadrightarrow P$ is the identity map on $P$. As $x$ is inverse semi-rational in $N$, there is $n \in N$ such that $x^n = x^4$. Write $n = my$, $m \in M$, $y \in P$. Then $x^y = x^4$, since $\pi(n) = y$. Hence, $x$ is inverse semi-rational in $P$, implying that $P$ is \cut. \end{proof}

The following proposition follows from Lemma~\ref{prop:centers_of_sylow_subgroups} and Proposition~\ref{p-length_1}:

\begin{proposition}\label{prop:G_cut_Sylow_Cut_for_abelian_or_normal_P}Let $G$ be a \cut\ group and $P \in \Syl_3(G)$. Then $P$ is also \cut, provided one of the following holds:
	\begin{enumerate}
		\item $P$ is abelian,
		\item $P$ is  a normal subgroup of $G$.
		\end{enumerate}	
\end{proposition}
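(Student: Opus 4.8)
The plan is to treat the two cases separately and then read off the final assertion as an immediate consequence. The target is \textbf{Proposition~\ref{prop:G_cut_Sylow_Cut_for_abelian_or_normal_P}}: if $G$ is \cut\ and $P \in \Syl_3(G)$, then $P$ is \cut\ provided $P$ is abelian or $P \slunlhd G$; and in particular a unique Sylow $3$-subgroup is \cut. The two displayed hypotheses have already been flagged by the paper as consequences of Lemma~\ref{prop:centers_of_sylow_subgroups} and Proposition~\ref{p-length_1}, so the real work is to produce, in each case, the right structural input that those two results consume.

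For case (2), where $P \slunlhd G$, I would invoke Proposition~\ref{p-length_1} directly. The point is that a normal Sylow $3$-subgroup makes $G$ automatically $3$-solvable of $3$-length at most $1$: take $M = 1$ and $N = P$; then $M$ is trivially a $3'$-group, $N/M = P$ is a $3$-group, and $G/N = G/P$ has order coprime to $3$ by Lagrange (since $P$ is a full Sylow $3$-subgroup), hence is a $3'$-group. This exhibits exactly the normal-series data required by Proposition~\ref{p-length_1}, which then yields that $P$ is \cut. So this case reduces to checking the definitions and citing the proposition.

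For case (1), where $P$ is abelian, I expect the argument to go through Lemma~\ref{prop:centers_of_sylow_subgroups}. If $P$ is abelian then $P = \Z(P)$, so the lemma gives $\exp P = \exp \Z(P) \mid 3$, i.e. $P$ is an elementary abelian $3$-group (every nontrivial element has order $3$). I would then argue that every element $x \in P$ is inverse semi-rational in $P$: since $o(x) \in \{1,3\}$, the remark following Proposition~\ref{prop:equiv_cut} already tells us that any element of order dividing $3$ is inverse semi-rational in every group containing it. By the criterion Proposition~\ref{prop:equiv_cut}\eqref{prop:conjugategenerators}, $P$ being a group in which every element is inverse semi-rational is precisely the statement that $P$ is \cut. (Alternatively one can note that an elementary abelian $3$-group has exponent dividing $6$, whence it is \cut\ by the same remark.) The final ``in particular'' sentence is then immediate: a unique Sylow $3$-subgroup is normal, so case~(2) applies.

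The main obstacle, such as it is, lies in case~(1): one must be careful to extract \emph{exponent dividing $3$} rather than merely \emph{center has exponent dividing $3$}, which is why the abelian hypothesis is essential—it is what lets Lemma~\ref{prop:centers_of_sylow_subgroups} control all of $P$ and not just its center. Once that reduction to exponent $3$ is in hand, inverse semi-rationality of each element is automatic from the low-order remark and no genuine computation remains. I would write the proof in roughly four sentences per case, being explicit about the choice $M=1$, $N=P$ in case (2) so that the hypotheses of Proposition~\ref{p-length_1} are visibly met.
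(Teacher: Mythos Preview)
Your proposal is correct and matches the paper's approach exactly: the paper gives no detailed proof, only the one-line remark that the proposition follows from Lemma~\ref{prop:centers_of_sylow_subgroups} and Proposition~\ref{p-length_1}, and you have supplied precisely the intended details---Lemma~\ref{prop:centers_of_sylow_subgroups} forces $\exp P \mid 3$ in the abelian case, while $M=1$, $N=P$ verifies the $3$-length $\leqslant 1$ hypothesis of Proposition~\ref{p-length_1} in the normal case.
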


\begin{remark} Since the examples of Isaacs and Navarro of rational groups with non-rational Sylow 
$2$-subgroups are of order $2^9\cdot 3$, one  might be tempted to think that \cut\ groups of 
order $2\cdot 3^a$ could serve to find examples of \cut\ groups whose Sylow $3$-subgroups are 
not \cut. However this is not the case. Indeed, using Cayley's {normal} $2$-complement theorem together 
with the previous proposition we obtain that if $G$ is a \cut\ group with order only divisible by the 
primes $2$ and $3$ and 
with cyclic Sylow $2$-subgroups (for example $|G|=2\cdot 3^a$),  then any Sylow $3$-subgroup of $G$  is \cut.
\end{remark}

\begin{theorem}\label{G_cut_Sylow_Cut} Let $G$ be a \cut\ group and $P \in \Syl_3(G)$. Then $P$ is also \cut, provided one of the following holds:
\begin{enumerate}
 \item $G$ is supersolvable,
 \item $G$ is a Frobenius group,
 \item\label{item:simple_3} $G$ is simple,
 \item\label{item:odd_order} $G$ is of odd order and $\O_3(G)$ is abelian.
\end{enumerate}
\end{theorem}

\begin{proof} \begin{enumerate} \item Supersolvable groups have a Sylow tower \cite[VI, Satz~9.1]{Hup67}, so in particular $p$-length at most $1$ for all primes $p$. The claim thus follows from Proposition~\ref{p-length_1}.
 \item Assume that $G$ is a Frobenius group with Frobenius kernel $F$ and Frobenius complement $K$. Since $F$ and $K$ have coprime order, a Sylow $3$-subgroup $P$ of $G$ is isomorphic to a Sylow $3$-subgroup of $F$ or to a Sylow $3$-subgroup of $K$. Assume that $3$ divides $|F|$. Since $F$ is nilpotent and characteristic in $G$, $P$ is normal in $G$. Hence it is \cut\ by Proposition~\ref{prop:G_cut_Sylow_Cut_for_abelian_or_normal_P}. If $3$ divides $|K|$, then $P$ is cyclic by \cite[Theorem~11.4.5(4)]{JdR15} and it follows from Lemma~\ref{prop:centers_of_sylow_subgroups} that it is also \cut. (Of course, we could also have used the description of Frobenius \cut\ groups in \cite[Theorem~1.3]{Bac17}.)
 \item In Theorem~\ref{prop:simple_cut_groups} the simple \cut\ groups were determined. They are given along with the nilpotency class of their Sylow $3$-subgroup $P$ in Table~\ref{simple_cut_groups}, showing that there nilpotency class can exceed $2$. Using, for example \textsf{GAP}, one can check that these Sylow $3$-subgroups are again \cut.
\begin{table}[ht!]\caption{Simple \cut\ groups $G$ with the nilpotency class of their Sylow $3$-subgroup $P$}\label{simple_cut_groups}
 \begin{center}
\begin{tabular}{c|ccccccccc}\hline\hline
 $G$ & $C_2$ & $C_3$ & $A_7$ &  $A_8$ &  $A_9$ & $A_{12}$ & $L_2(7)$ & $U_3(3)$ & $U_3(5)$ \\
 $\operatorname{cl}(P)$ & $0$ & $1$ & $1$ & $1$ & $3$ & $3$ & $1$ & $2$ & $1$\\ \hline\hline
\end{tabular}

\medskip
\begin{tabular}{c|ccccccccc}\hline\hline
  $G$ & $U_4(3)$ & $U_5(2)$ & $U_6(2)$ & $S_4(3)$ & $S_6(2)$ & $O_8^+(2)$ & $ M_{11}$ & $M_{12}$ & $M_{22}$ \\
  $\operatorname{cl}(P)$ &  $3$ & $3$ & $3$ & $3$ & $3$ & $3$ & $1$ & $2$ & $1$ \\ \hline\hline
\end{tabular}

\medskip
\begin{tabular}{c|ccccccccc}\hline\hline
 $G$ & $M_{23}$ &  $M_{24}$ & $Co_1$ & $Co_2$ & $Co_3$ & $HS$ & $McL$ & $Th$ & $M$\\
   $\operatorname{cl}(P)$ &  $1$ & $2$ & $5$ & $3$ & $3$ & $1$ & $3$ & $7$ & $9$ \\ \hline\hline
\end{tabular}
 \end{center}
 \end{table}
 \item In the case a group has odd order, then inverse semi-rational is the same as semi-rational by \cite[Remark~13]{CD10}. These groups have been described in \cite[Theorem~3]{CD10}. They come in three families: $3$-groups, certain Frobenius groups and certain groups that are occasionally called $2$-Frobenius or double Frobenius groups. Only in the last case we still need to verify the claim. By \cite[Theorem~3~(2)]{CD10} they have the following structure:  $|G| = 7\cdot 3^b$ and $G$ contains a normal Frobenius subgroup of index $3$ with $\O_3(G)$ the Frobenius kernel. Moreover $\O_3(G)T \in \Syl_3(G)$ for some subgroup $T = \langle t \rangle$ of order $3$ and $G/\O_3(G)$ is the non-abelian group of order 21.
 
 Assume that $\O_3(G)$ is abelian. We aim to show that every element of the Sylow $3$-subgroup $P = \O_3(G)\langle t \rangle$ is inverse semi-rational in $P$. Firstly, we claim that in this setting $\O_3(G)$ has exponent $3$. 
Assume there is $y \in \C_{\O_3(G)}(t)$ of order $9$. 
Then $[t, y] = 1$ and hence $t \in \C_G(y)$. However, $\C_G(y) = \O_3(G)$: clearly, we have $\O_3(G) \leqslant \C_G(y)$, since the former is abelian, but also since $\O_3(G)F$ is a Frobenius group for every $F \in \Syl_7(G)$, $\C_G(y)$ cannot contain elements of order $7$ and since $y$ is inverse semi-rational in $G$, $[\N_G(\langle y \rangle) :\C_G(y)] = 3$, so indeed $\C_G(y) = \O_3(G)$. Summing up, $t \in \C_G(y) = \O_3(G)$. This yields a contradiction since $t \not\in \O_3(G)$, by the form of the Sylow $3$-subgroup. So there is no element of order $9$ in $ \C_{\O_3(G)}(t)$ and $\exp (\C_{\O_3(G)}(t)) = 3$. By \cite[Lemma~2.4]{KMS14}, $\O_3(G) = \langle\ \C_{\O_3(G)}(t)^f\ |\ f\in F\ \rangle$ for some $F \in \Syl_7(G)$, so also $\exp (\O_3(G)) = 3$, since the latter is abelian. Thus every element of $\O_3(G)$ is inverse semi-rational in $\O_3(G)$ and hence also in $P \geqslant \O_3(G)$.

Now assume that $x \in P \setminus \O_3(G)$. Then $P$ is the unique Sylow $3$-subgroup of $G$ containing $x$, because $\O_3(G) < \langle \O_3(G),\ x \rangle \leqslant P$ and $[P : \O_3(G)] = 3$. Since $x$ is inverse semi-rational in $G$, Lemma~\ref{lem:cut-3-local} asserts that $x$ is inverse semi-rational in a Sylow $3$-subgroup that contains it. Hence, $x$ is inverse semi-rational in $P$. Consequently, every element of $P$ is inverse semi-rational in $P$, i.e., $P$ is \cut.
 \qedhere
\end{enumerate}
\end{proof}

Recently, N.~Grittini proved that the assumption that $\O_3(G)$ is abelian in \eqref{item:odd_order} is superfluous: The Sylow $3$-subgroups of \cut\ groups of odd order are always \cut\ \cite{Gri20}.\\

Note that the class of groups in \eqref{item:odd_order} contains groups of $3$-length $2$. Theorem~\ref{G_cut_Sylow_Cut} could also have been proved using the dual characterization of \cut\ groups using characters. \\

 Recall that a group $G$ is called a \emph{Camina group}, if $G' \neq G$ and, for every $g \not\in G'$, the coset $gG'$ is a conjugacy class. As Camina groups are either Frobenius or $p$-groups \cite{DS96}, we immediately have the following corollary from Theorem \ref{G_cut_Sylow_Cut}:
 
 \begin{corollary}
 	A Sylow 3-subgroup of a Camina \cut\ group is again a \cut\ group.
 \end{corollary}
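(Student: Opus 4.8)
The plan is to reduce the corollary entirely to Theorem~\ref{G_cut_Sylow_Cut} by invoking the structural dichotomy for Camina groups. First I would recall the cited fact that a Camina group $G$ is either a Frobenius group or a $p$-group for some prime $p$ (see \cite{DS96}). This splits the argument into two cases, each of which is already covered by a case of Theorem~\ref{G_cut_Sylow_Cut} or by a preceding result, so no new group-theoretic work beyond citing these results should be required.

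In the Frobenius case, I would simply apply Theorem~\ref{G_cut_Sylow_Cut}~(2): if $G$ is a Camina \cut\ group that happens to be Frobenius, then its Sylow $3$-subgroup $P$ is \cut\ directly by that part of the theorem. In the $p$-group case, I would split according to whether $p = 3$ or $p \neq 3$. If $p = 3$, then $G$ is itself a $3$-group, so $P = G$ is already a \cut\ group by hypothesis and there is nothing to prove. If $p \neq 3$, then $3 \nmid |G|$, so the Sylow $3$-subgroup is trivial and is vacuously \cut\ (the trivial group satisfies the \cut\ condition since it is abelian of exponent dividing $4$). Thus in every case $P$ is \cut.

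The main (and essentially only) obstacle is to make sure the Camina dichotomy is applied correctly and that the two branches genuinely line up with the available hypotheses. The subtle point worth checking is that the $p$-group branch does not silently assume $p = 3$; I would be careful to note that when the prime $p$ governing the Camina $p$-group is different from $3$, the conclusion holds trivially because $P$ is then the trivial group. Since the Frobenius branch is handled verbatim by Theorem~\ref{G_cut_Sylow_Cut}~(2) and the $p$-group branch collapses to either $P = G$ or $P = 1$, the whole proof is a short case analysis with no residual computation. I would therefore present it as a two-line deduction: invoke \cite{DS96} to obtain the dichotomy, dispatch the Frobenius case through Theorem~\ref{G_cut_Sylow_Cut}, and observe that the $p$-group case is immediate.
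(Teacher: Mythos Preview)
Your proposal is correct and follows essentially the same approach as the paper: invoke the Camina dichotomy from \cite{DS96} and reduce to Theorem~\ref{G_cut_Sylow_Cut}. The only cosmetic difference is that the paper dispatches both branches through that theorem (a $p$-group is nilpotent, hence supersolvable, so case~(1) applies), whereas you handle the $p$-group branch directly by observing $P = G$ or $P = 1$; either route is immediate.
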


Using the above results and \textsf{GAP} with the \textsf{grpconst} package \cite{grpconst}, we have verified that the Sylow $3$-subgroups of all \cut\ groups of order at most $2000$, as well as for order $2^2\cdot 3^6$, $2^3\cdot 3^6$ and $2^2\cdot 3^7$, are again \cut. Naturally, the following question arises:

\begin{question}\label{que:sylwo_3_of_cut}
\textbf{Are Sylow $3$-subgroups of \cut\ groups again \cut?}
\end{question}

Question~\ref{que:sylwo_3_of_cut} essentially asks whether we can always replace the existence assertion for the Sylow $3$-subgroup in Lemma~\ref{lem:cut-3-local} by a ``for all'' statement. 

\begin{remark} Recall that (normal) subgroups of \cut\ groups need not to be \cut\ groups. But for classes for which Question~\ref{que:sylwo_3_of_cut} has a positive answer, the Sylow $3$-subgroups have this property. In view of Proposition~\ref{prop:equiv_cut}, this implies that for such groups, the centers of the Wedderburn components of the rational group algebra of \cut\ groups also influence those of its Sylow $3$-subgroups. For, if the centers of the Wedderburn components of $\QQ G$ are all rational or quadratic imaginary, then the same holds true for $\QQ P$ for $P \in \Syl_3(G)$. \end{remark}

\section{Existence of \cut\ groups}\label{sect:quantity}
In this section, we will give some indications that the class of \cut\ groups is surprisingly large in all finite groups. Recall that a $p$-group can only be a \cut\ group if $p \in \{2, 3\}$ see \cite[Theorem~1]{BMP17}. We show that in these cases, the ratio of \cut\ groups tends to one in the logarithmic sense.

\begin{proposition} \label{rem:negligible} Let $c(r)$ denote the number of \cut\ groups of order $r$ and $f(r)$ the number of all groups of order $r$. Then
\begin{equation}\label{eq:assymptotic}\tag{**} \lim_{n \to \infty} \frac{\ln c(p^n)}{\ln f(p^n)} = 1, \qquad \text{for } p \in \{2, 3\}.\end{equation} \end{proposition}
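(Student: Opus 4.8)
The plan is to separate the statement into easy numerator/denominator bookkeeping and a substantive combinatorial lower bound. First I would invoke the classical enumeration of $p$-groups due to Higman and Sims: the number of groups of order $p^n$ satisfies
\[ \ln f(p^n) = \left(\tfrac{2}{27} + o(1)\right) n^3 \ln p. \]
Since every \cut\ group of order $p^n$ is in particular a group of that order, $c(p^n) \leqslant f(p^n)$, whence $\limsup_{n} \frac{\ln c(p^n)}{\ln f(p^n)} \leqslant 1$. Thus the entire content of \eqref{eq:assymptotic} is the matching lower bound
\[ \ln c(p^n) \geqslant \left(\tfrac{2}{27} - o(1)\right) n^3 \ln p, \]
that is, producing enough \cut\ groups of order $p^n$ to recover the leading coefficient $\tfrac{2}{27}$.

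The source of \cut\ groups I would exploit is the observation following Proposition~\ref{prop:equiv_cut}: every group of exponent dividing $4$ or $6$ is \cut, since each of its elements has order in $\{1,2,3,4,6\}$ and is therefore inverse semi-rational. Hence for $p = 3$ it suffices to count groups of exponent $3$ (note $3 \mid 6$) and for $p = 2$ groups of exponent $4$; in both cases I restrict to nilpotency class $2$, the regime where Higman's lower-bound construction lives. Concretely I would build, for each admissible $d$, groups $G$ of order $p^n$ with $\Phi(G) = G' = \Z(G) =: W$ elementary abelian of rank $n-d$ and $V := G/W$ elementary abelian of rank $d$; the commutator then induces a surjective alternating bilinear map $\beta \colon V \times V \to W$, and conversely any such $\beta$ is realized. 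For odd $p$ (the case $p = 3$) one takes the power map trivial, giving exponent $p$; for $p = 2$ one must additionally prescribe the squaring map, which in class $2$ satisfies $(xy)^2 = x^2 y^2 [y,x]$ and is thus a quadratic map $q \colon V \to W$ with polar form $\beta$. Such a $q$ always exists, and since squares land in the elementary abelian $W$, the resulting special $2$-group automatically has exponent dividing $4$, so it is \cut.

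The counting step is then Higman's argument. The space of alternating bilinear maps $V \times V \to W$ has dimension $\binom{d}{2}(n-d)$ over $\mathbb{F}_p$, and for $d$ near $\tfrac{2n}{3}$ a proportion $1-o(1)$ of them is surjective (as $\binom{d}{2} \gg n-d$); two maps yield isomorphic groups only if they lie in a common orbit of $\mathrm{GL}_d(p) \times \mathrm{GL}_{n-d}(p)$, a group of order at most $p^{d^2 + (n-d)^2} = p^{O(n^2)}$. Therefore the number of isomorphism classes of such \cut\ groups is at least
\[ p^{\binom{d}{2}(n-d) - O(n^2)}. \]
Writing $d = \alpha n$, the leading exponent is $\tfrac{1}{2}\alpha^2(1-\alpha)n^3$, maximized at $\alpha = \tfrac{2}{3}$ with value $\tfrac{2}{27}n^3$. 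Taking $d = \lfloor \tfrac{2n}{3}\rfloor$ yields $\ln c(p^n) \geqslant (\tfrac{2}{27} - o(1))n^3 \ln p$, and combining with the upper bound proves \eqref{eq:assymptotic}. For $p = 2$ the same estimate holds with alternating forms replaced by quadratic maps $q$; the extra linear freedom changes the exponent only by $O(d(n-d)) = O(n^2)$, leaving the leading coefficient intact.

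The hard part is the lower bound, and within it the delicate point is ensuring that imposing the exponent restriction forcing \cut\ ($3$ when $p = 3$, $4$ when $p = 2$) does not erode the coefficient $\tfrac{2}{27}$. For $p = 3$ this is painless, as exponent $3$ is automatic once the power map is trivial. The genuinely fiddly case is $p = 2$, where the squaring map is coupled to the commutator through $(xy)^2 = x^2 y^2 [y,x]$, so one must recast the construction in terms of quadratic maps and verify that passing from bilinear forms to quadratic maps costs only $O(n^2)$ in the exponent, hence nothing in the limit.
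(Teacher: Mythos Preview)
Your proposal is correct and follows essentially the same approach as the paper: both invoke the Higman--Sims bounds for $f(p^n)$ and then observe that Higman's lower-bound construction already yields \cut\ groups (exponent $3$ for $p=3$, exponent dividing $4$ for $p=2$), so the leading coefficient $\tfrac{2}{27}$ is preserved. The paper is terser on the $p=2$ case, simply noting that Higman's groups are extensions of elementary abelian $2$-groups by elementary abelian $2$-groups and hence automatically of exponent dividing $4$; your detour through quadratic squaring maps is therefore unnecessary, though harmless.
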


\begin{proof} In their work on asymptotic behavior of the number of $p$-groups, Higman, Sims, Newman and Seeley showed that for each prime $p$, \[p^{\frac{2}{27} m^2(m-6)}\ \leqslant\ f(p^m)\ \leqslant\ p^{\frac{2}{27}m^3 + O(m^\frac{5}{2})}, \] see \cite[Theorem~4.5, Theorem~5.7]{BNV07}. Noting that all the groups constructed by Higman to obtain the lower bound are actually \cut\ groups in the case $p = 2$ (as they are extensions of elementary abelian $2$-groups by elementary abelian $2$-groups), see \cite[Section~4]{BNV07}, shows that \eqref{eq:assymptotic} holds for $p = 2$. By \cite[Theorem~19.3]{BNV07} it follows that the number of $3$-groups of exponent $3$ (and nilpotency class $2$) has the same asymptotic behavior in the leading term, so \eqref{eq:assymptotic} also holds for $p = 3$. \end{proof}

\begin{figure}[ht!]\caption{Rational and \cut\ groups of order at most 1023}\label{f1}\centering
	\includegraphics[scale=.4]{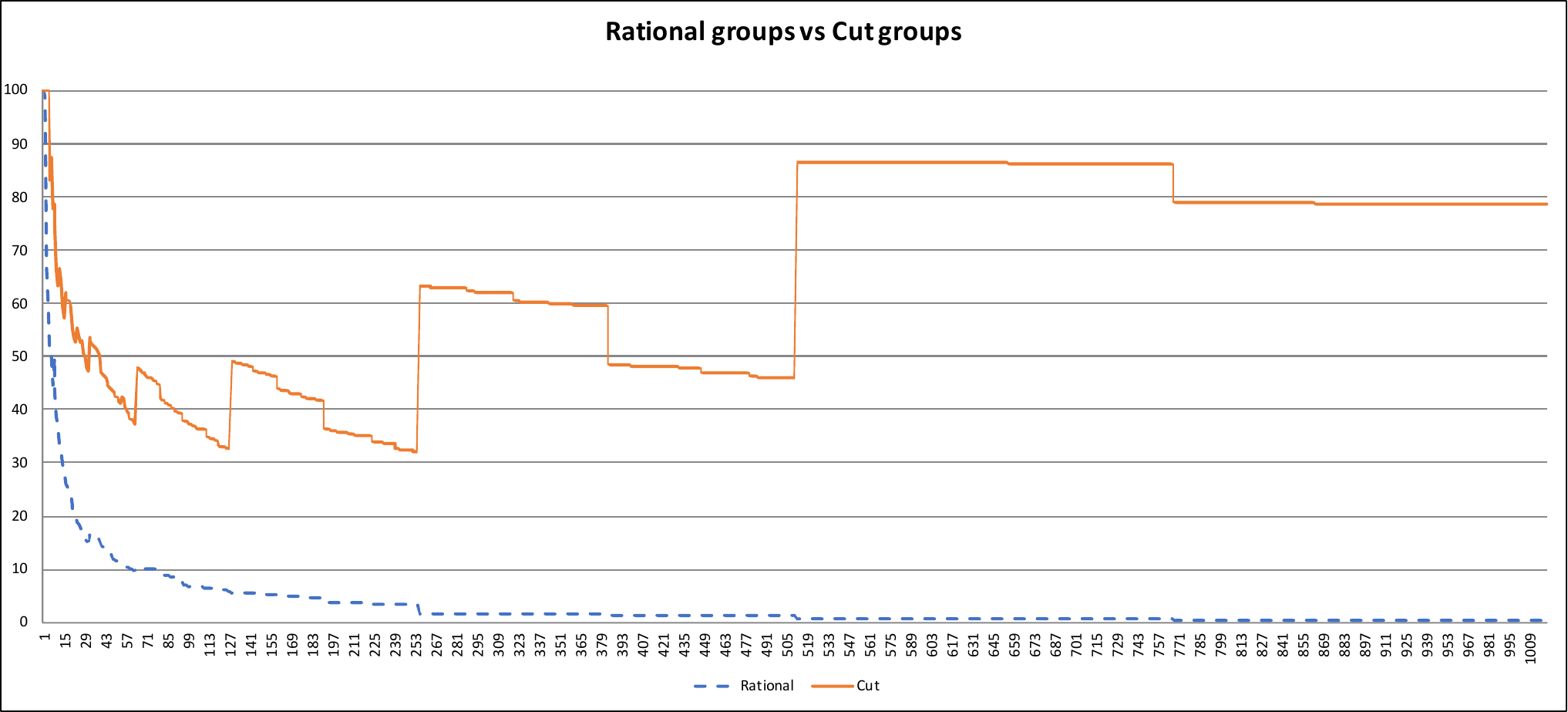}
	\renewcommand
	{\figurename}{Figure}
\end{figure}

\begin{remark} Though the above proposition shows that there are many \cut\ groups in the class of $p$-groups, it says nothing about the actual percentage. For groups of small order we now also support this with numerical data which we obtained using \textsf{GAP}  and the \textsf{SglPPow} package \cite{SglPPow}. Table~\ref{tab:2_groups} lists the number and percentage of rational as well as \cut\ $2$-groups up to order $2^9$. Observe that for the $2$-groups, the percentage of \cut\ groups increases again from order $2^8$. The number and percentage of \cut\ $3$-groups up to order $3^8$ are listed in Table~\ref{tab:3_groups} (note that the only rational group of odd order is the trivial one). Also, in Table~\ref{tab:mixed_groups}, we list the number and percentage of rational and \cut\ groups of selected mixed orders. Figure~\ref{f1} presents a chart showing the percentage of groups that are rational and \cut, up to order 1023. The horizontal and the vertical axes indicate the order and the percentage respectively. The dashed  graph gives the percentage of rational groups up to that order whereas the solid graph indicates the percentages of \cut\ groups up to that order. Note that the upward bumps for the percentage of \cut\ groups appear at $2$-powers, whereas the (visible) bumps downwards for this percentage happen at orders of the form $2^a\cdot 3$. Yet also for these orders the \cut\ groups are still surprisingly numerous, cf.\ Table~\ref{tab:mixed_groups}. Also, when considering all groups up to a certain small order, \cut\ groups are not negligible. For instance, about $86.62\%$ of the groups of order at most 512 and 78.55\% of groups of order at most 1023 are \cut\ groups, whereas  0.57\% of the groups of order at most 512 and 0.52\% of groups of order at most 1023 are rational.

\begin{question}\label{que_quant_3grps} Let $f$ and $c$ be defined as in Proposition~\ref{rem:negligible}. Is there $m \in \mathbb{Z}_{\geqslant 1}$ such that \[ \frac{c(3^{m+1})f(3^m)}{c(3^m)f(3^{m+1})} > 1, \] i.e.\ does the proportion of the \cut\ $3$-groups in all $3$-groups increase again at a certain point (as it does for $2$-groups at $2^8$)?
\end{question}

\begin{table}[ht!]\caption{Rational and \cut\ groups in small order $2$-groups}\label{tab:2_groups}
 \begin{center}
{\scriptsize
\begin{tabular}{cccc} \hline\hline
$2^n$ & \begin{minipage}{.16\textwidth} number of all groups of order $2^n$ \end{minipage} & \begin{minipage}{.16\textwidth} number of rational groups of order $2^n$ \end{minipage} & \begin{minipage}{.16\textwidth} number of \cut\ groups of order $2^n$ \end{minipage} \\
 & & \emph{percentage of rational groups of order $2^n$} & \emph{percentage of \cut\ groups of order $2^n$}  \\ \hline\hline
 $2^2 = 4$ & $2$ & $1$ & $2$ \\ 
 & & \emph{50\%} & \emph{100\%} \\ \hline
 $2^3 = 8$ & $5$ & $3$ & $4$ \\ 
 & & \emph{60\%} & \emph{80\%} \\ \hline
 $2^4 = 16$ & $14$ & $3$ & $10$ \\ 
 & & \emph{21.43\%} & \emph{71.43\%} \\ \hline
 $2^5 = 32$ & $51$ & $10$ & $33$ \\ 
 & & \emph{19.61\%} & \emph{64.71\%} \\ \hline
 $2^6 = 64$ & $267$ & $30$ & $161$ \\ 
 & & \emph{11.24\%} & \emph{60.30\%} \\ \hline
 $2^7 = 128$ & $2\ 328$ & $124$ & $1\ 349$ \\
 & & \emph{5.33\%} & \emph{57.95\%} \\ \hline
 $2^8 = 256$ & $56\ 092$ & $748$ & $37\ 593$ \\
 & & \emph{1.34\%} & \emph{67.02\%} \\ \hline
 $2^9 = 512$ & $10\ 494\ 213$ & $59\ 514$ & $9\ 127\ 858$ \\
 & & \emph{0.57\%} & \emph{86.98\%} \\ \hline
 \hline
\end{tabular}
}
\end{center}
\end{table}

\begin{table}[ht!]\caption{\textsf{Cut}\ groups in small order $3$-groups}\label{tab:3_groups}
 \begin{center}
{\scriptsize
\begin{tabular}{ccc} \hline\hline
$3^n$ & \begin{minipage}{.16\textwidth} number of all groups of order $3^n$ \end{minipage} & \begin{minipage}{.16\textwidth} number of \cut\ groups of order $3^n$ \end{minipage} \\
 & & \emph{percentage of \cut\ groups of order $3^n$}  \\ \hline\hline
 $3^2 = 9$ & $2$ & $1$ \\ 
 & & \emph{50\%}  \\ \hline
 $3^3 = 27$ & $5$ & $3$  \\ 
 & & \emph{60\%} \\ \hline
 $3^4 = 81$ & $15$ & $4$ \\ 
 & & \emph{26.67\%}  \\ \hline
 $3^5 = 243$ & $67$ & $14$  \\ 
 & & \emph{20.90\%} \\ \hline
 $3^6 = 729$ & $504$ & $96$ \\ 
 & & \emph{19.05\%}  \\ \hline
 $3^7 = 2187$ & $9\ 310$ & $595$ \\
 & & \emph{6.39\%}  \\ \hline
 $3^8 = 6561$ & $1\ 396\ 077$ & $66\ 312$ \\
 & &  \emph{5.06\%}  \\ \hline
 \hline
\end{tabular}
}
\end{center}
\end{table}

\begin{table}[ht!]\caption{Rational and \cut\ groups for some selected mixed orders}\label{tab:mixed_groups}
 \begin{center}
{\scriptsize
\begin{tabular}{cccc} \hline\hline
$m$ & \begin{minipage}{.16\textwidth} number of all groups of order $m$ \end{minipage} & \begin{minipage}{.16\textwidth} number of rational groups of order $m$ \end{minipage} & \begin{minipage}{.16\textwidth} number of \cut\ groups of order $m$ \end{minipage} \\
 & & \emph{percentage of rational groups of order $m$} & \emph{percentage of \cut\ groups of order $m$}  \\ \hline\hline
 $2^2 \cdot 5 = 20$ & $5$ & $0$ & $1$ \\ 
 & & \emph{0\%} & \emph{20\%} \\ \hline
 $2\cdot3\cdot 7 = 42$ & $6$ & $0$ & $2$ \\ 
 & & \emph{0\%} & \emph{33.33\%} \\ \hline
 $2^6\cdot 3 = 192$ & $\ 1\ 543$ & $18$ & $318$ \\ 
 & & \emph{1.17\%} & \emph{20.61\%} \\ \hline 
 $2^7\cdot 3 = 384$ & $20\ 169$ & $317$ & $2\ 279$ \\ 
 & & \emph{0.32\%} & \emph{11.30\%} \\ \hline 
 $2^3\cdot 5^2 = 400$ & $221$ & $1$ & $12$ \\ 
 & & \emph{0.45\%} & \emph{5.43\%} \\ \hline
 $2^6\cdot 3^2 = 576$ & $8\ 681$ & $45$ & $1\ 074$ \\
 & & \emph{0.52\%} & \emph{12.37\%} \\ \hline
 $2^3 \cdot 3\cdot 7^2 = 588$ & $66$ & $0$ & $4$ \\
 & & \emph{0\%} & \emph{6.06\%} \\ \hline
 $2^3\cdot 3^4 = 648$ & $757$ & $11$ & $151$ \\
 & & \emph{1.45\%} & \emph{19.95\%}  \\ \hline 
 $2^8\cdot 3 = 768$ & $1\ 090\ 235$ & $304$ & $64\ 765$ \\
 & & \emph{0.03\%} & \emph{5.94\%}  \\ \hline  
 $2\cdot 3^6 = 1458$ & $1\ 798$ & $1$ & $387$ \\
 & & \emph{0.06\%} & \emph{21.52\%}  \\ \hline  
 $2^3\cdot 3^2 \cdot 5^2 = 1800$ & $749$ & $0$ & $14$ \\
 & & \emph{0\%} & \emph{1.87\%}  \\ \hline  
 $2^3\cdot 3^5 = 1944$ & $3\ 973$ & $17$ & $525$ \\
 & & \emph{0.43\%} & \emph{13.21\%}  \\ \hline  
 \hline
\end{tabular}
}
\end{center}
\end{table}
\end{remark}

\noindent {\bf Acknowledgements.}\
We are grateful to Urban Jezernik, Gabriel Navarro, Inder Bir Singh Passi and {\'A}ngel del R{\'i}o for very interesting discussions. We want to thank J{\"u}rgen M{\"u}ller for providing us with the Sylow $3$-subgroups of some sporadic simple groups. We are grateful to Benjamin Sambale for the proof of the improved Theorem~\ref{prop:rational_classes_vs_characters}. The fourth author is thankful to Eric Jespers and FWO for the local hospitality provided for stay at Vrije Universiteit Brussel, which played a vital role in the outcome of this paper. We want to thank Adalbert and Victor Bovdi for pointing out the existence of the articles \cite{Pat78,Bov87} to us. We wish to thank the reviewers for suggesting changes that improved the readability of the paper.


\newcommand{\etalchar}[1]{$^{#1}$}
\providecommand{\bysame}{\leavevmode\hbox to3em{\hrulefill}\thinspace}
\providecommand{\MR}{\relax\ifhmode\unskip\space\fi MR }
\providecommand{\MRhref}[2]{%
  \href{http://www.ams.org/mathscinet-getitem?mr=#1}{#2}
}
\providecommand{\href}[2]{#2}

\end{document}